\newtheorem{theorem}{Theorem}
\newtheorem{lemma}{Lemma}
\newtheorem{corollary}{Corollary}
\newenvironment{proof}{\begin{trivlist}
\item[\hskip\labelsep{\it Proof.}]}{$\hfill\Box$\end{trivlist}}
  \newcommand{\RefApp}[1]{\ref{#1}}
  \newcommand{\RefApp}[1]{Appendix~\ref{#1}}
\newcommand{\U}{\mathit{U}}    % unitary and
\newcommand{\SU}{\mathit{SU}}  % special unitary groups U and SU
\newcommand{\satop}[2]{\stackrel{\scriptstyle{#1}}{\scriptstyle{#2}}}
\newcommand{\bsDelta}{\boldsymbol{\Delta}}
\newcommand{\bsv}{\boldsymbol{v}}
\newcommand{\bsx}{\boldsymbol{x}}
\newcommand{\bsh}{\boldsymbol{h}}
\newcommand{\bst}{\boldsymbol{t}}
\newcommand{\bsz}{\boldsymbol{z}}
\newcommand{\bsu}{\boldsymbol{u}}
\newcommand{\bsy}{\boldsymbol{y}}
\newcommand{\bsphi}{\boldsymbol{\phi}}
\newcommand{\bszero}{\boldsymbol{0}}
\newcommand{\rd}{\mathrm{d}}
\newcommand{\ri}{\mathrm{i}}
\newcommand{\calH}{\mathcal{H}}
\newcommand{\calI}{{\mathcal{I}}}
\newcommand{\calO}{{\mathcal{O}}}
\newcommand{\calQ}{{\mathcal{Q}}}
\newcommand{\bbZ}{{\mathbb{Z}}}
\newcommand{\bbN}{{\mathbb{N}}}
\newcommand{\bbR}{{\mathbb{R}}}
\newcommand{\Fourier}{{\mathfrak{F}}}
\begin{document}

\ifdefined\journalstyle
  \begin{frontmatter}
\fi

\title{Lattice Meets Lattice: \\ Application of Lattice Cubature to \\ Models in Lattice Gauge Theory}

\ifdefined\journalstyle  %%% NOTE: if changing addresses change both here and below (journal and non-journal style are different)
  \author[1,1b]{Tobias Hartung}
  \ead{tobias.hartung@desy.de}
  \author[2]{Karl Jansen}
  \ead{karl.jansen@desy.de}
  \author[3]{Frances Y.~Kuo}
  \ead{f.kuo@unsw.edu.au}
  \author[4]{Hernan Le\"ovey}
  \ead{hernaneugenio.leoevey@axpo.com}
  \author[5]{Dirk Nuyens\corref{cor1}}
  \ead{dirk.nuyens@cs.kuleuven.be}
  \author[3]{Ian H.~Sloan}
  \ead{i.sloan@unsw.edu.au}
  \address[1]{Computation-based Science and Technology Research Center, The Cyprus Institute, 20 Konstantinou Kavafi Street, 2121 Nicosia, Cyprus}
  \address[1b]{Department of Mathematics, King's College London, Strand, London WC2R 2LS, United Kingdom}
  \address[2]{NIC, DESY Zeuthen, Platanenallee 6, 15738 Zeuthen, Germany}
  \address[3]{School of Mathematics and Statistics, UNSW Sydney, Sydney NSW 2052, Australia}
  \address[4]{Structured Energy Trading, AXPO Trading \& Sales, Parkstrasse 23, 5400 Baden, Germany}
  \address[5]{Department of Computer Science, KU Leuven, Celestijnenlaan 200A, 3001 Leuven, Belgium}
  \cortext[cor1]{Corresponding author}
\else
  \author{
     Tobias Hartung\footnote{%
       Computation-based Science and Technology Research Center, The Cyprus Institute, 20 Konstantinou Kavafi Street, 2121 Nicosia, Cyprus
       and
       Department of Mathematics, King's College London, Strand, London WC2R 2LS, United Kingdom
       (email: tobias.hartung@desy.de)},
     \quad
     Karl Jansen\footnote{%
       NIC, DESY Zeuthen, Platanenallee 6, 15738 Zeuthen, Germany
       (email: karl.jansen@desy.de)},
     \quad
     Frances Y.~Kuo\footnote{%
       School of Mathematics and Statistics, UNSW Sydney, Sydney NSW 2052, Australia
       (email: f.kuo@unsw.edu.au)},
     \\
     Hernan Le\"ovey\footnote{%
       Structured Energy Trading, AXPO Trading \& Sales, Parkstrasse 23, 5400 Baden, Germany
       (email: hernaneugenio.leoevey@axpo.com)},
     \quad
     Dirk Nuyens\footnote{%
       Department of Computer Science, KU Leuven, Celestijnenlaan 200A, 3001 Leuven, Belgium
       (email: dirk.nuyens@cs.kuleuven.be)},
     \quad
     Ian H.~Sloan\footnote{%
       School of Mathematics and Statistics, UNSW Sydney, Sydney NSW 2052, Australia
       (email: i.sloan@unsw.edu.au)}}
\fi

\date{March 2021}

\ifdefined\journalstyle
\else
  \maketitle
\fi

\begin{abstract}
High dimensional integrals are abundant in many fields of research
including quantum physics. The aim of this paper is to develop efficient
recursive strategies to tackle a class of high dimensional integrals
having a special product structure with \emph{low order couplings},
motivated by models in \emph{lattice gauge theory} from quantum field
theory. A novel element of this work is the potential benefit in using
\emph{lattice cubature rules}. The group structure within lattice rules
combined with the special structure in the physics integrands may allow
efficient computations based on Fast Fourier Transforms. Applications to
the quantum mechanical rotor and compact $\U(1)$ lattice gauge theory in
two and three dimensions are considered.
\end{abstract}

\ifdefined\journalstyle
\begin{keyword}
%% keywords here, in the form: keyword \sep keyword
Lattice Cubature \sep
Quasi-Monte Carlo \sep
Recursive Integration \sep
Lattice Gauge Theory \sep
Quantum Physics

%% PACS codes here, in the form: \PACS code \sep code

%% MSC codes here, in the form: \MSC code \sep code
%% or \MSC[2008] code \sep code (2000 is the default)
\textit{MSC:} % I've added the "MSC:" at the beginning of the line
65D30 (Numerical integration) \sep
65D32 (Quadrature and cubature formulas) \sep
65T50 (Discrete and fast Fourier transforms) \sep
65Z05 (Computer aspects of numerical algorithms: Applications to physics) \sep
81T80 (Quantum field theory: Simulation and numerical modeling)
\end{keyword}
\fi

\ifdefined\journalstyle
  \end{frontmatter}
\fi

\section{Introduction} \label{sec:intro}

High dimensional integrals are abundant in many fields of research
including quantum physics.
The aim of this paper is to develop efficient recursive
strategies to tackle a class of high dimensional integrals having a
special product structure with \emph{low order couplings}, motivated by
models in \emph{lattice gauge theory} \cite{Gattringer:2010zz} from
quantum field theory. A novel element of this work is the potential
benefit in using \emph{lattice cubature rules} \cite{DKS13,SJ94}. The
group structure within lattice rules combined with the special structure
in the physics integrands may allow efficient computations based on Fast
Fourier Transforms (FFT). Note the two different occurrences of the word
``\emph{lattice}'' here: one as a discretization tool in physics, and one
as a method of approximating integrals in numerical analysis.

The integrals being considered are motivated by problems from simulations
of quantum field theories, e.g., systems in high energy and statistical
physics. In the simplest formulation, we have an $L$-dimensional integral
of the form
\begin{align} \label{eq:int1}
 \int_{D^L} \prod_{i=0}^{L-1} f_i\big(x_i,x_{i+1}\big) \,\rd\bsx,
 \qquad\mbox{with $\bsx = (x_0,\ldots,x_{L-1})$ and $x_L\equiv x_0$,}
\end{align}
where each variable $x_i$ belongs to a bounded domain $D\subset \bbR$ and
each function $f_i$ depends only on two consecutive variables $x_i$ and
$x_{i+1}$. We will refer to the condition $x_L\equiv x_0$ as
``\emph{parametric periodicity}''. This is a defining characteristic of
the class of problems that we consider in this paper. Models in lattice
field theory often couple the relevant degrees of freedom on neighboring
lattice sites, and as a result exhibit the structure of \eqref{eq:int1}.
As a concrete example we consider here the \emph{topological oscillator}
or \emph{quantum rotor}, see
\cite{AGHJLV16,Bietenholz:1997kr,Bietenholz:2010xg} and
Section~\ref{sec:rotor} below, which is a 1D (i.e., one space-time
dimension) quantum mechanical model in Euclidean time with $L$ lattice
points. Often the functions $f_i$ are identical (or only one of them is
different due to the presence of the ``observable function''), depend only
on the difference of the two input variables, and are periodic in each
coordinate direction. For example, $f_i(u,v) = \exp(\beta\cos(v-u))$ with
$D = [-\pi,\pi)$.

Note that there are two senses of dimensionality here: typically by
dimension we are referring to the \emph{number of integration variables},
denoted by~$L$ in \eqref{eq:int1} and later more generally denoted by~$s$;
but there is also the \emph{space-time dimension} of the underlying
physical problem which we will denote by $d$ and describe as 1D, 2D or 3D
(i.e., $d=1,2,3,\ldots$).

Often in high energy and statistical physics, integrals of the form
\eqref{eq:int1} appear in both the numerator and denominator of a ratio
that represents the expected value of an observable (see e.g.,
\eqref{eq:ratio} below). The end goal is typically to obtain this ratio
rather than the separate integrals, and a popular strategy is to tackle
this using \emph{Markov chain Monte Carlo $($MCMC$)$} simulations. Here
instead we propose to treat the integrals by numerical integration
methods.

One possibility is to apply an $L$-fold tensor product of a
one-dimensional quadrature rule to approximate the integral
\eqref{eq:int1}. That is, each integral over $D$ in \eqref{eq:int1} is
approximated by a one-dimensional quadrature rule with $n$ points
$t_1,\ldots,t_n\in D$ and corresponding weights $w_1,\ldots, w_n \in\bbR$,
so that the approximation to \eqref{eq:int1} is given by
\begin{align} \label{eq:prod1}
  \sum_{k_0=0}^{n-1} w_{k_0} \cdots \sum_{k_{L-1}=0}^{n-1} w_{k_{L-1}}\,
 \prod_{i=0}^{L-1} f_i\big(t_{k_i},t_{k_{i+1}}\big),
\end{align}
where for convenience we extend parametric periodicity to the notation of
indices so that $k_L \equiv k_0$.

We switch now to the general notation with $s$ instead of $L$ denoting the
number of integration variables. The approximation of \eqref{eq:int1} by
\eqref{eq:prod1} can be expressed in the general form
\begin{align} \label{eq:gen}
  \int_{D^s} f(\bsx) \,\rd\bsx
  \,\approx\,
  \sum_{k=1}^{N} \omega_k f(\bst_k),
\end{align}
where the $N = n^s$ points are denoted by $\bst_1,\ldots,\bst_N\in D^s$,
with the corresponding weights $\omega_1,\ldots,\omega_N\in\bbR$ being
products of the one-dimensional weights.
Product rules are generally \emph{not} recommended in high dimensions
because the computational cost is generally $\calO(n^s) = \calO(N)$. If
the one-dimensional rule with a general integrand has error
$\calO(n^{-\alpha})$ for some $\alpha>0$, then the error for the product
rule is also $\calO(n^{-\alpha})$. Expressing now the error with respect
to the cost, it would be $\calO(N^{-\alpha/s})$, which suffers from the
\emph{curse of dimensionality} for large $s$.

Some alternatives to product rules in high dimensions include \emph{Monte
Carlo $($MC$)$ methods}, \emph{quasi-Monte Carlo $($QMC$)$ methods}
\cite{DP10,DKS13,Hic98b,Lem09,Nie92,SJ94}, and \emph{sparse grid $($SG$)$
methods} \cite{BG04}. SG methods use only a strategically chosen sparse
subset of the $n^s$ grid points, thus reducing the cost. Both MC and QMC
methods take the form of \eqref{eq:gen} with equal weights $\omega_k =
1/N$, but now $N$ is no longer the $s$th power of some number $n$, thus
avoiding the exponential cost in $s$. MC and QMC methods are fundamentally
different: the MC points are generated randomly and have an error
convergence rate of $\calO(N^{-1/2})$, while the QMC points are designed
and chosen deterministically to be better than random in the sense of
achieving a higher order convergence rate $\calO(N^{-\alpha})$, with
$\alpha$ close to $1$ or better, where $\alpha$ depends on the smoothness
of the integrand and properties of the QMC points. Modern QMC analysis in
\emph{weighted function spaces} can give error bounds that are
\emph{independent of $s$}, see e.g., \cite{DKS13}. Conceptually, this
requires that the integrands have \emph{low effective dimension}, either
in the sense that only the initial variables are important (\emph{low
truncation dimension}), or that an integrand is dominated by a sum of
terms involving only a few variables at a time (\emph{low superposition
dimension}), see e.g., \cite{CMO97}. QMC methods have also been considered
for physics applications, e.g., in \cite{GHJLGM14,JLAGM14} for lattice
systems and in \cite{Borowka:2018goh,deDoncker:2018nqe} for multiloop
calculations in perturbation theory.

However, integrands of the form in \eqref{eq:int1} most likely will
\emph{not} have low effective dimension in either of the two senses
described above. So it is unclear if a direct application of QMC methods
to \eqref{eq:int1} would be fruitful. On the other hand, these integrands
have a special structure: since each product factor in \eqref{eq:int1}
depends only on two neighboring variables, it has been shown in
\cite{AGHJLV16} that a recursive integration strategy can be used to
evaluate the product rule \eqref{eq:prod1} very efficiently without an
exponential cost in $s=L$. We refer to \eqref{eq:int1} as an example in 1D
with \emph{first order coupling}, and denote this coupling order by $r=1$;
see Figure~\ref{fig1}(a) for an illustration of the active variables in
$f_i$. Recursive integration has been considered in e.g.,
\cite{Cra08,GenKal86,Hay06,Hay11}, but not for integrands with parametric
periodicity $x_L \equiv x_0$. For parametric periodicity a more careful
analysis is needed.

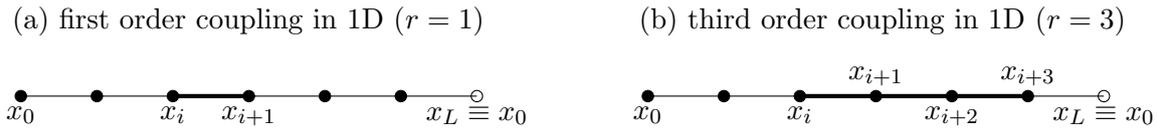
\begin{figure}
  \centering
  \begin{tikzpicture}[scale=1]
    \begin{scope}
      \node[anchor=west] at (-0.25,1) {(a) first order coupling in 1D $(r=1)$};
      \foreach \i in {0,...,5} {
        \filldraw[black] (\i,0) circle (.075);
      }
      \draw (6,0) circle (.075);
      \node[below] at (0,0) {$x_0$};
      \node[below] at (2,0) {$x_i$};
      \node[below] at (3,0) {$x_{i+1}$};
      \node[below] at (6,0) {$x_L \equiv x_0$};
      \draw (0,0) -- (6,0);
      \draw[ultra thick] (2,0) -- (3,0);
    \end{scope}
    \begin{scope}[shift={(8.25,0)}]
      \node[anchor=west] at (-0.25,1) {(b) third order coupling in 1D $(r=3)$};
      \foreach \i in {0,...,5} {
        \filldraw[black] (\i,0) circle (.075);
      }
      \draw (6,0) circle (.075);
      \node[below] at (0,0) {$x_0$};
      \node[below] at (2,0) {$x_i$};
      \node[above] at (3,0) {$x_{i+1}$};
      \node[below] at (4,0) {$x_{i+2}$};
      \node[above] at (5,0) {$x_{i+3}$};
      \node[below] at (6,0) {$x_L \equiv x_0$};
      \draw (0,0) -- (6,0);
      \draw[ultra thick] (2,0) -- (5,0);
    \end{scope}
  \end{tikzpicture}
\caption{Illustration of variable couplings}\label{fig1}
\end{figure}

\smallskip

\textbf{The first contribution of this paper} is to review this recursive
strategy from~\cite{AGHJLV16,Hartung:2020uuj} and identify favorable
scenarios when the cost can be further reduced using FFT. Our findings are
summarized in Table~\ref{tab1} in Section~\ref{sec:first}. As an example
of the best scenario (Scenario (A7)), for the integral \eqref{eq:int1}
with $f_i(u,v) = \exp(\beta \cos(v-u))$, the cost for the product
rectangle rule is only
\[
  \calO(n\log(n)),
  \qquad\mbox{\emph{independently of $L$}},
\]
with an error of $\calO(n^{-\alpha})$ (or even converging exponentially
fast in $n$) due to the well-known Euler--Maclaurin formula for
trapezoidal rules applied to periodic functions of smoothness order
$\alpha$. In other words, it is possible to achieve the error of a full
tensor product rule with a cost that is independent of the number of
integration variables $L$. This is a remarkable outcome!

\smallskip

\textbf{The second contribution of this paper} is to extend the recursive
strategy to the situation where the domain $D$ in \eqref{eq:int1} is
replaced by an $s$-dimensional domain $D^s$, giving an $L$-fold product of
$s$-dimensional integrals of the form
\begin{align} \label{eq:intLsb}
 \int_{(D^s)^L} \prod_{i=0}^{L-1} f_i\big(\bsx_i,\bsx_{i+1}\big) \,\rd\bsx.
\end{align}
Here $\bsx \in (D^s)^L = D^{L\times s}$ can be interpreted as a matrix of
size $L\times s$, with $\bsx_i = (x_{i,0},\ldots,x_{i,s-1}) \in D^s$
referring to its $i$th row, and we have parametric periodicity $x_{i,j}
\equiv x_{i\bmod L,\; j\bmod s}$ for all $i,j\in\bbN$. Compared to our
strategy for $s=1$ we here just need to replace the $1$-dimensional
quadrature rules by $s$-dimensional cubature rules. In particular, we
propose to use $n$-point \emph{lattice cubature rules}, see below. We
obtain completely analogous results in Table~\ref{tab1}. In the best
scenario (Scenario (A7)) where FFT is applicable, the cost is only
\[
  \calO(n\log(n)),
  \qquad\mbox{\emph{independently of $L$ and $s$}}.
\]
The error is $\calO(n^{-\alpha})$ where $\alpha$ depends on the smoothness
of the integrand and properties of the underlying lattice rule. The
implied constant in the error bound is again independent of $L$, but can
potentially depend exponentially on $s$, unless the integrand belongs
to a suitable weighted function space, see the brief introduction of
lattice cubature rules near the end of the introduction and the references
there.

\smallskip

\textbf{The third contribution of this paper} is to extend the recursive
strategy further to higher order couplings with arbitrary order~$r$, that
is, each function $f_i(x_i,x_{i+1})$ in \eqref{eq:int1} is now replaced by
\[
 f_i(x_i,x_{i+1},\ldots,x_{i+r}),
\]
with parametric periodicity $x_i \equiv x_{i\bmod L}$ in place for all
indices $i\in\bbN$; see Figure~\ref{fig1}(b) for an illustration of the
active variables in $f_i$ with $r=3$. Our aim is to control the cost with
increasing~$r$. The motivation from physics models is that, instead of
approximating a first derivative (e.g., the angular velocity) by a forward
difference formula which depends on two neighboring variables, we use a
central difference formula or other higher order difference formulas which
depend on more neighboring variables. This is motivated by the fact that
in this way the discrete lattice effects are suppressed and one is working
closer to the desired continuum model. Here we propose to use a tensor
product of an $r$-dimensional lattice cubature rule (assuming for
simplicity that $r$ divides $L$). Our findings are summarized in
Table~\ref{tab2} in Section~\ref{sec:higher}. We show that under the right
condition (i.e., if the functions $f_i$ have certain desirable properties,
see Scenario (B7)), the cost of the recursive strategy based on a
$(L/r)$-fold product of an $r$-dimensional lattice rule with $n$ points is
only
\[
  \calO(n\log(n)),
  \qquad\mbox{\emph{independently of $L$ and $r$}}.
\]
The error is again $\calO(n^{-\alpha})$ where $\alpha$ depends on the
integrand and the lattice rule, and the implied constant is independent of
$L$ but can potentially depend exponentially on $r$. In case that the
desired condition does not hold, we have an alternative strategy (see
Scenario (B4)) with cost of $\calO(\log(L/r)\,n^3)$.

\smallskip

\textbf{The fourth contribution of this paper} is to generalize the
recursive strategy to 2D problems with first order couplings of the
generic form
\begin{align} \label{eq:int2D}
  \int_{D^{2L^2}} \prod_{i=0}^{L-1} \prod_{j=0}^{L-1}
  f_{i,j}\big(x^a_{i,j} - x^a_{i,j+1} - x^b_{i,j} + x^b_{i+1,j} \big)
  \,\rd\bsx,
\end{align}
where $\bsx = (\bsx^a,\bsx^b)$ has $2L^2$ components $x_{i,j}^a$ and
$x_{i,j}^b$ for $i,j=0,\ldots, L-1$, with parametric periodicity
$x_{i,j}^a \equiv x_{i\bmod L,\,j\bmod L}^a$ and  $x_{i,j}^b \equiv
x_{i\bmod L,\,j\bmod L}^b$ for all indices $i,j\in\bbN$.
Generally speaking, the form of the integrand given in \eqref{eq:int2D} is
characteristic of gauge theories which are at the heart of quantum field
theories to describe the mediating fields between matter fields, e.g.,
quarks. Here we consider a pure $\U(1)$ gauge theory in two dimensions,
so-called ``2-dimensional compact $\U(1)$ lattice gauge theory'', see
Section~\ref{sec:QED} below. We first show that the problem can be turned
into a nested integration problem where our favorable strategy (see
Scenario (A7)) can be applied to both the outer and inner integrals. Then
we show that the problem further simplifies to become essentially a 1D
problem, yielding an overall cost of only
\[
  \calO(n\log(n)),
  \qquad\mbox{\emph{independently of $L$}}.
\]
The error is $\calO(n^{-\alpha})$ with $\alpha$ depending on the
smoothness of the functions. This is truly remarkable for an integration
problem with $2L^2$ variables.

\smallskip

\textbf{The future outlook} is to generalize the recursive strategy to 2D
problems with higher order couplings, and also to 3D problems where the
number of integration variables is $3L^3$, and eventually couple in matter
fields. We have made some progress in this direction. The challenge is
that, even with the recursive strategy reducing the dimensionality of the
problems, the remaining integrals to be evaluated are still very high
dimensional. We derived some explicit representations of the integrals in
terms of Fourier coefficients of the functions, and these may hold the key
to tackle these tough problems in future work.

\smallskip

Up to this point we have not yet formally introduced lattice cubature
rules; we will do this now. \emph{Rank-$1$ lattice rules}
\cite{DKS13,Hic98b,Lem09,SJ94} are a family of QMC methods for
approximating an integral over the $s$-dimensional unit cube $[0,1]^s$,
with $s=L$ or $r$ or else as appropriate. They take the form
\begin{align} \label{eq:lat}
  \frac{1}{N} \sum_{k=1}^{N} f\bigg(\frac{k\bsz\bmod N}{N}\bigg),
\end{align}
where $\bsz \in\bbZ^s$ is known as the \emph{generating vector} of the
lattice rule and determines the quality of the rule. There is an
underlying \emph{infinite lattice} of points $\{\frac{k\bsz}{N} :
k\in\bbZ\} \subset \bbR^s$. The sum or difference of any two lattice
points is another lattice point. The cubature points of a lattice rule are
those points from the infinite lattice that lie in the half-open unit
cube, together with the origin. They form a \emph{group} under addition
modulo the integers. If a given integral is not formulated over the unit
cube, then an appropriate change of variables should be used to
reformulate the problem.

Loosely speaking, lattice rules can achieve the error $\calO(N^{-\alpha})$
if the integrand $f$ is periodic with respect to each variable, and if its
Fourier coefficients decay in a suitable way, where $\alpha$ is a
smoothness parameter which roughly corresponds to the number of available
mixed derivatives of $f$, see e.g., \cite{SJ94}. The implied constant in
the error bound for lattice rules can depend exponentially on $s$, but can
also be independent of~$s$ by working in a weighted function space setting
with sufficiently decaying weight parameters. Good lattice generating
vectors can be obtained by \emph{fast component-by-component
constructions}, see e.g., \cite{DKS13,Nuy14}.

The outline of the paper is as follows. In Section~\ref{sec:physics} we
introduce various physics models of interest, to motivate the special
product structure of the integrands that we consider in this paper. For
the next two sections, we move away from the explicit physics models and
instead consider generic classes of integrands with specific product
structures, and develop strategies to approximate the integrals
efficiently under various assumptions on the properties of the product
factors. Specifically, in Section~\ref{sec:first} we review and extend the
recursive strategy from \cite{AGHJLV16} in detail, including the use of
FFT and the extension to an $L$-fold product of $s$-dimensional domains;
in Section~\ref{sec:higher} we extend the recursive strategy to higher
order couplings. Then in Section~\ref{sec:app-rotor} we return to the
application of the recursive strategy to the quantum rotor, and in
Section~\ref{sec:app-QED} we consider applications to compact $\U(1)$
lattice gauge theory. Section~\ref{sec:summary} provides a brief
summary. The Appendix includes derivations of explicit expressions for
some integrals using Fourier series, paving ways for future work.

\section{Description of physics models} \label{sec:physics}

Although in this article we are mainly interested in the mathematical
structure of the integrals to be solved, the motivation for the form of
the integrals come from physics models. We therefore describe here two
models from physics which lead to such structures.

The characteristic of a quantum mechanical system is that not only the
classical path contributes to a physical observable, but -- according to
Feynman -- that all possible paths have to be taken into account. This
leads to the notion of a {\em path integral}. Following Feynman's
description, a quantum mechanical system is defined by the path integral
\begin{align}
  \int_{D^L} \exp(-S[\bsphi]) \,\rd\bsphi , \label{equ:pint}
\end{align}
where $D$ is a domain in $\bbR$ and $S[\bsphi]$ denotes the {\em action}
of the considered quantum mechanical model. For a general, mathematically
well-defined formulation of the path integral we refer to
\cite{Hartung:2018usn,Jansen:2019uei}. Here we consider the special case
of a time-discretized quantum mechanical system which also renders the
path integral well defined. However, the integral in \eqref{equ:pint} can
still be of high dimension $L$, where $L$ can be in the thousands or much
larger. Of physical interest is the expected value $\langle
O[\bsphi]\rangle$ of an observable $O[\bsphi]$ which can be calculated
within the path integral formalism as
\begin{align}
  \langle O[\bsphi] \rangle \,=\, \frac
  {\int_{D^L} O[\bsphi] \, \exp(-S[\bsphi]) \,\rd\bsphi}
  {\int_{D^L} \exp(-S[\bsphi]) \,\rd\bsphi} . \label{eq:ratio}
\end{align}

\subsection{Quantum rotor} \label{sec:rotor}

As anticipated in the introduction, the first model we are going to
consider in this article is the quantum rotor which describes a particle
with mass $m_0$ moving on a circle with radius~$r_0$, see
\cite{AGHJLV16,Bietenholz:1997kr,Bietenholz:2010xg}. Thus, the particle
has a moment of inertia $I = m_0 r_0^2$. We investigate this particular
model because it has already some characteristic features of non-linear
$\sigma$-models and gauge theories, see e.g., \cite{Gattringer:2010zz}. An
example of such a gauge theory will be discussed in the next subsection.

In the simple quantum rotor model, the free coordinate of the system is
the angle $\phi\in D=[-\pi,\pi)$ describing the position of the particle
on the circle. In the continuum, the system is described by the action
\begin{align} \label{eq:limit}
 S(\phi)=\int_{0}^T \frac{I }{2}\Big(\frac{\rd \phi}{\rd t}\Big)^2 \rd t.
\end{align}
With $T$ and $I$ kept fixed, taking $L$ time discretizations with lattice
spacing $h = T/L$, and approximating $\frac{1}{2}(\frac{\rd \phi}{\rd
t})^2 \approx \frac{1}{2}(\frac{\phi_{i+1} - \phi_i}{h})^2 \approx \frac{1
- \cos(\phi_{i+1} - \phi_i)}{h^2}$, we obtain the discretized action and
observables
\begin{align} \label{eq:one}
  S[\bsphi]
  \,=\, \frac{I}{h^2}\sum_{i=0}^{L-1} \big(1-\cos(\phi_{i+1}-\phi_{i})\big)
  \qquad\mbox{and}\qquad
  O[\bsphi] \,=\, \cos(\phi_{k+1} - \phi_k) \quad\mbox{for any $k$},
\end{align}
where $\phi_L \equiv \phi_0$, i.e., we assume periodic boundary
conditions. Although unusual at first sight, the choice of approximation
$\frac{1}{2}(\frac{\rd \phi}{\rd t})^2 \approx \frac{1 - \cos(\phi_{i+1} -
\phi_i)}{h^2}$ is important, since the cosine introduces periodicity.
Numerically this will allow us to use FFT, and so reduce the computational
cost of the integration problem significantly. From a physics point of
view, the cosine form is also interesting because it resembles actions
used for gauge theories and thus provides a proving ground before tackling
gauge theories.

Furthermore, it is important to note that any constant term in the action
$S$, such as the~$1$ in $1-\cos$, can be removed because the action enters
the ratio $\frac{\int O\, e^{-S}}{\int e^{-S}}$ only as an exponent in
both numerator and denominator. Any constant contributions to the action
will therefore cancel. Hence, the ratio \eqref{eq:ratio} is given by
\begin{align} \label{eq:rotor}
  \langle O[\bsphi] \rangle \,=\, \frac
  {\int_{D^L} \cos\big(\phi_{k+1} - \phi_k) \, \exp(\beta\sum_{i=0}^{L-1}\cos(\phi_{i+1}-\phi_{i})\big) \,\rd\bsphi}
  {\int_{D^L} \exp\big(\beta\sum_{i=0}^{L-1}\cos(\phi_{i+1}-\phi_{i})\big) \,\rd\bsphi},
  \qquad \beta = \frac{IL^2}{T^2}.
\end{align}

Note that the arguments leading to the continuum action \eqref{eq:limit}
is referred to as the ``naive continuum limit'' since the true,
non-perturbative continuum limit is reached by sending $h\rightarrow 0$
and therefore $L = T/h\to\infty$, while keeping $T$ and $I$ fixed. The
form of the lattice action in \eqref{eq:rotor} is not unique. Any action
that reproduces the continuum time derivative when $h \to 0$ and
$L\to\infty$ is a valid discretization of the continuum action. For
example, we may use a higher order finite difference formula instead of
forward difference, leading to higher order couplings in the variables.
More complicated forms of the lattice action can have the advantage that
unwanted lattice spacing effects are canceled out. Whether such possible,
more complicated forms of an action reproduce the correct continuum
physics can, however, only be answered when the above sketched
non-perturbative procedure of the continuum limit is carried through.

Conversely, this non-uniqueness of the action can also be used to our
advantage. Above, we have introduced $\frac{1}{2}(\frac{\rd \phi}{\rd
t})^2 \approx \frac{1 - \cos(\phi_{i+1} - \phi_i)}{h^2}$ instead of the
canonical $\frac{1}{2}(\frac{\rd \phi}{\rd t})^2 \approx
\frac{1}{2}(\frac{\phi_{i+1} - \phi_i}{h})^2$. Both choices reproduce the
correct continuum action~\cite{AGHJLV16}, yet the cosine choice has
significant numerical advantages. In this sense, it is not only important
to develop numerical methods that are well-adjusted to meet the needs of
computational physics, but modeling in computational physics is an
important step which allows for lattice actions to be designed such that
they can be addressed efficiently with existing numerical methods. A prime
example is the discretization of fermionic actions which come in a number
of different incarnations with different advantages and disadvantages
\cite{Gattringer:2010zz,Jansen:2008vs}.

Observe that both $S[\bsphi]$ and $O[\bsphi]$ are $2\pi$-periodic with
respect to each of the integration variables $\phi_i$. So the integrals
remain unchanged if we shift the integration domain from $[-\pi,\pi)$ to
$[0,2\pi)$. We can then apply the linear mapping $\phi = 2\pi x$ in each
coordinate to convert the integrals into the unit cube $[0,1)^L$.

We can also consider a slightly more complicated observable called the
topological susceptibility. The topological susceptibility $\chi_t$ is
related to the topological charge $Q$ of the system
\begin{align} \label{eq:topcharge}
 Q[\bsphi]
 \,=\,
 \frac1{2\pi} \sum_{i=0}^{L-1} \Big(
 (\phi_{i+1}-\phi_{i}) \bmod [-\pi,\pi) \Big),
\end{align}
where
\[
  \psi \bmod [-\pi,\pi)
  \,:=\,
  \begin{cases}
  \psi \bmod 2\pi & \mbox{if } \psi \bmod 2\pi \in [0,\pi), \\
  \psi \bmod 2\pi - 2\pi & \mbox{otherwise.}
  \end{cases}
\]
The topological susceptibility is then given by the expectation (with $h$
set to $1$)
\begin{align}\label{eq:topsusc}
  \chi_t \,=\, \frac{1}{L} \big\langle (Q[\bsphi])^2 \big\rangle.
\end{align}
In lay terms, the topological charge captures the winding number of the
path the particle takes over its lifetime in this circular universe of the
quantum rotor. It is therefore an observable that holds global information
even though only local terms contribute. For example, the topological
susceptibility \eqref{eq:topsusc} can be used to extract the energy gap of
the quantum rotor. Further discussion on this observable is deferred to
Section~\ref{sec:topsus}.

\subsection{Quantum compact abelian gauge theory} \label{sec:QED}

The essential building blocks of models in high energy physics are
so-called gauge theories. They describe the physics of the force mediating
particles between matter fields. The lattice action of a gauge theory is
constructed from {\em gauge fields} and assumes a particular form based on
the plaquette discussed below. The gauge fields themselves are represented
by group valued variables which are taken from the abelian group $\U(1)$,
or the non-abelian ones, $\U(N)$ or $\SU(N)$ with $N\ge 2$.

Models of high energy physics live in three space and one time dimensions
and take the group $\U(1)$ to describe the electromagnetic, $\SU(2)$ to
describe the weak and $\SU(3)$ to describe the strong interactions, the
latter being the interaction between quarks and gluons. While
investigating these models in $3+1$ dimensions would, of course, be
physically most interesting, they are, unfortunately, presently too
demanding for our approach. However, lower dimensional systems capture
already a number of the essential characteristics of these realistic
models and being able to solve gauge theories in lower dimensions would
open a most promising path to address eventually interesting and important
questions in high energy and condensed matter physics.

As a very first target theory we will consider here 2-dimensional compact
$\U(1)$ lattice gauge theory \cite{Gattringer:2010zz}. The ratio of
interest \eqref{eq:ratio} for this model has in this case the denominator
\begin{align} \label{eq:QED2D}
  \int_{D^{2L^2}}
  \exp\bigg(\beta \sum_{i=0}^{L-1} \sum_{j=0}^{L-1}
  \cos\Big(\phi^a_{i,j} + \phi^b_{i+1,j} - \phi^a_{i,j+1} - \phi^b_{i,j} \Big)
  \bigg)
  \,\rd\bsphi,
\end{align}
and e.g., (using translational invariance) the numerator
\begin{align*}
  &\int_{D^{2L^2}}
  \cos\Big(\phi^a_{0,0} + \phi^b_{1,0} - \phi^a_{0,1} - \phi^b_{0,0} \Big)
  \, \exp\bigg(\beta \sum_{i=0}^{L-1} \sum_{j=0}^{L-1}
  \cos\Big(\phi^a_{i,j} + \phi^b_{i+1,j} - \phi^a_{i,j+1} - \phi^b_{i,j} \Big)
  \bigg)
  \,\rd\bsphi.
\end{align*}
Here and below $D=[-\pi,\pi]$, and we have parametric periodicity where
all indices should be taken modulo $L$.

The above model involves only first order couplings. We can have higher
order couplings by considering the \emph{Wilson loop} with parameters
$r_a$ and $r_b$. The ratio of interest \eqref{eq:ratio} in this case has the
denominator
\begin{align} \label{eq:Wilson}
  &\int_{D^{2L^2}}
  \exp\bigg(\beta \sum_{i=0}^{L-1} \sum_{j=0}^{L-1}
  \cos\Big(
  \phi^a_{i,j} + \phi^a_{i+1,j} + \cdots + \phi^a_{i+r_a,j}
  + \phi^b_{i+r_a,j} + \phi^b_{i+r_a,j+1} + \cdots + \phi^b_{i+r_a,j+r_b} \nonumber\\
  & \qquad\qquad\qquad\qquad\qquad\qquad
  - \phi^a_{i+r_a,j+r_b} - \phi^a_{i+r_a-1,j+r_b} - \cdots - \phi^a_{i,j+r_b} \nonumber\\
  & \qquad\qquad\qquad\qquad\qquad\qquad
  - \phi^b_{i,j+r_b} - \phi^b_{i,j+r_b-1} - \cdots - \phi^b_{i,j}
  \Big)
  \bigg)
  \,\rd\bsphi,
\end{align}
while the numerator should include an extra factor as observable, which
could be the sum
\begin{align*}
  O[\bsphi]
  &\,=\,
  \sum_{i=0}^{L-1} \sum_{j=0}^{L-1}
  \cos\Big(
  \phi^a_{i,j} + \phi^a_{i+1,j} + \cdots + \phi^a_{i+r_a,j}
  + \phi^b_{i+r_a,j} + \phi^b_{i+r_a,j+1} + \cdots + \phi^b_{i+r_a,j+r_b} \\
  & \qquad\quad
   - \phi^a_{i+r_a,j+r_b} - \phi^a_{i+r_a-1,j+r_b} - \cdots - \phi^a_{i,j+r_b}
   - \phi^b_{i,j+r_b} - \phi^b_{i,j+r_b-1} - \cdots - \phi^b_{i,j}
  \Big),
\end{align*}
or just one of the terms in the sum.

The 3-dimensional compact $\U(1)$ lattice gauge theory is more complicated,
involving more than one cosine term in the action. In the case of first
order coupling, the ratio of interest \eqref{eq:ratio} has the denominator
\begin{align} \label{eq:QED3D}
  &\int_{D^{3L^3}}
  \exp\bigg(\beta \sum_{i=0}^{L-1} \sum_{j=0}^{L-1} \sum_{k=0}^{L-1} \bigg[
  \cos\Big(\phi^a_{i,j,k} - \phi^a_{i,j+1,k} - \phi^b_{i,j,k} + \phi^b_{i+1,j,k} \Big) \nonumber\\
  &\qquad\qquad\qquad\qquad\qquad\qquad
  + \cos\Big(\phi^c_{i,j,k} - \phi^c_{i+1,j,k} - \phi^a_{i,j,k} + \phi^a_{i,j,k+1} \Big) \nonumber\\
  &\qquad\qquad\qquad\qquad\qquad\qquad
  + \cos\Big(\phi^b_{i,j,k} - \phi^b_{i,j,k+1} - \phi^c_{i,j,k} + \phi^c_{i,j+1,k} \Big)
  \bigg] \bigg)
  \,\rd\bsphi,
\end{align}
while the numerator should have an extra factor as observable, e.g.,
\[
  O[\bsphi]
  \,=\, \cos\Big(\phi^a_{0,0,0} - \phi^a_{0,1,0} - \phi^b_{0,0,0} + \phi^b_{1,0,0}\Big).
\]

Although using these cosine terms is a very common way of expressing a
$\U(1)$ gauge theory like QED (quantum electrodynamics), they are not
particularly insightful with respect to the $\U(1)$ structure and its
generalizations. The angles $\phi_{i,j,\ldots}^\alpha$ are the angles of
$\U(1)$ elements $\Phi_{i,j,\ldots}^\alpha=e^{\ri\phi_{i,j,\ldots}^\alpha}$
which itself describe the transformation that moves the field from one
lattice point to a neighboring point. The superscript $a$ then denotes
movement increasing the first index $i$ by one, i.e., $\Phi_{i,j}^a$
describes the transformation of the field moving from the lattice point
$(i,j)$ to $(i+1,j)$, while $(\Phi_{i,j}^a)^{-1} = e^{-\ri\phi_{i,j}^a}$
describes its inverse. Similarly $\Phi_{i,j}^b$ describes movement from
$(i,j)$ to $(i,j+1)$. In three dimensions we also have a $c$ direction
describing movement from $(i,j,k)$ to $(i,j,k+1)$. Following this
procedure, $\U(1)$ gauge theories in arbitrarily many dimensions can be
constructed.

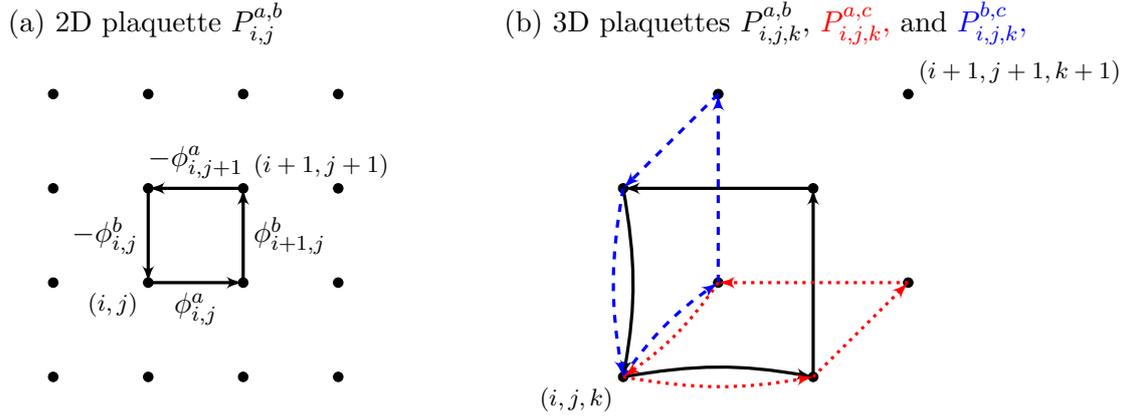
\begin{figure}
  \centering
  \begin{tikzpicture}[scale=1.25,>=latex']
    \node at (-.025,3.75) {(a) 2D plaquette $P_{i,j}^{a,b}$};
    \foreach \i in {-1,...,2}{
      \foreach \j in {0,...,3}{
        \filldraw[black] (\i,\j) circle (.05);
      }
    }
    \draw[->,black,very thick] (0,1) -- (1,1);
    \draw[->,black,very thick] (1,1) -- (1,2);
    \draw[->,black,very thick] (1,2) -- (0,2);
    \draw[->,black,very thick] (0,2) -- (0,1);
    \node[black,below] at (0.5,1) {$\phi_{i,j}^a$};
    \node[black,above] at (0.5,2) {$-\phi_{i,j+1}^a$};
    \node[black,left] at (0,1.5) {$-\phi_{i,j}^b$};
    \node[black,right] at (1,1.5) {$\phi_{i+1,j}^b$};
    \node[black,below left] at (0,1) {\footnotesize $(i,j)$};
    \node[black,above right] at (1,2) {\footnotesize $(i+1,j+1)$};
    \node at (6.5,3.75) {(b) 3D plaquettes {\color{black} $P_{i,j,k}^{a,b}$, {\color{red} $P_{i,j,k}^{a,c}$,} and {\color{blue} $P_{i,j,k}^{b,c}$,}}};
    \filldraw[black] (5,0) circle (.05);
    \filldraw[black] (7,0) circle (.05);
    \filldraw[black] (5,2) circle (.05);
    \filldraw[black] (7,2) circle (.05);
    \filldraw[black] (6,1) circle (.05);
    \filldraw[black] (8,1) circle (.05);
    \filldraw[black] (6,3) circle (.05);
    \filldraw[black] (8,3) circle (.05);
    \node[black,below left] at (5,0) {\footnotesize $(i,j,k)$};
    \node[black,above right] at (8,3) {\footnotesize $(i+1,j+1,k+1)$};
    \draw[->,black,very thick] (5,0) to[out=10,in=170] (7,0);
    \draw[->,black,very thick] (7,0) -- (7,2);
    \draw[->,black,very thick] (7,2) -- (5,2);
    \draw[->,black,very thick] (5,2) to[out=280,in=80] (5,0);
    \draw[->,dashed,blue,very thick] (5,0) to[out=55,in=215] (6,1);
    \draw[->,dashed,blue,very thick] (6,1) -- (6,3);
    \draw[->,dashed,blue,very thick] (6,3) -- (5,2);
    \draw[->,dashed,blue,very thick] (5,2) to[out=260,in=100] (5,0);
    \draw[->,dotted,red,very thick] (5,0) to[out=-10,in=190] (7,0);
    \draw[->,dotted,red,very thick] (7,0) -- (8,1);
    \draw[->,dotted,red,very thick] (8,1) -- (6,1);
    \draw[->,dotted,red,very thick] (6,1) to[out=235,in=35] (5,0);
  \end{tikzpicture}
\caption{Construction of the plaquette in two (left) and three
(right) dimensions. In two dimensions, the plaquette $P_{i,j}^{a,b}$ at
lattice point $(i,j)$ is constructed by transporting the field along the
path $(i,j)\to(i+1,j)\to(i+1,j+1)\to(i,j+1)\to(i,j)$. The field
transformation along this path is given by \eqref{eq:plaquette-path}. The
angles $\phi$ contribute with a positive sign if moving right or up, and
with a negative sign if moving left or down. Left/right movement is
labeled with a superscript $a$ and up/down movement with superscript $b$.
In three dimensions an additional foward/backward movement is denoted by
the superscript $c$. Furthermore, there are three independent smallest
loops that can be taken; namely in the $(a,b)$-plane (black solid line),
$(a,c)$-plane (red dotted line), and $(b,c)$-plane (blue dashed line).
Each of these planes equally contribute a ``2-dimensional'' plaquette. We
note that all links are straight lines which we have bent simply for
better visualization.}\label{fig:QED-plaquette}
\end{figure}

The cosine terms above now follow from the self-interaction of the quantum
field. The (non-trivial) first order discretization of this
self-interaction is therefore given by the smallest non-trivial path
through the physical lattice. In 2-dimensions starting from the lattice
point $(i,j)$ that is
  \begin{align}\label{eq:plaquette-path}
    (i,j)\xrightarrow{\Phi_{i,j}^a}(i+1,j)\xrightarrow{\Phi_{i+1,j}^b}(i+1,j+1)\xrightarrow{(\Phi_{i,j+1}^a)^{-1}}(i,j+1)\xrightarrow{(\Phi_{i,j}^b)^{-1}} (i,j).
  \end{align}
  The self-interaction at the point $(i,j)$ is thus given by the plaquette
  \begin{align}\label{eq:plaquette-definition}
    P_{i,j}^{a,b} = \Phi_{i,j}^a\Phi_{i+1,j}^b(\Phi_{i,j+1}^a)^{-1}(\Phi_{i,j}^b)^{-1} = e^{\ri\left(\phi_{i,j}^a+\phi_{i+1,j}^b-\phi_{i,j+1}^a-\phi_{i,j}^b\right)}
  \end{align}
and the cosines in the 2-dimensional action are precisely given by the
real parts of the plaquettes $P_{i,j}^{a,b}$. Real parts of larger loops
are the Wilson loops. Furthermore, in three or more dimensions, plaquettes
can be built in the $(a,b)$-plane, $(a,c)$-plane, $(b,c)$-plane, etc. The
3-dimensional action is thus given by summing over all three plaquettes at
each lattice point. The construction of the plaquettes in two and three
dimensions is visualized in Figure~\ref{fig:QED-plaquette}.

In a similar way, taking $\Phi_{i,j,\ldots}^\alpha$ from $\U(N)$ or $\SU(N)$
and taking the trace of \eqref{eq:plaquette-definition} would allow for
the construction of other gauge theories such as the weak interaction,
using $\SU(2)$, or QCD (quantum chromodynamics or strong nuclear force),
using $\SU(3)$.

In addition to the self-interaction of the quantum field, the complete QED
action also contains an interaction term with fermions and a topological
term. The fermions are not included in the description shown above because
they are modeled using Grassmann variables which can be integrated
analytically. The analytic expression for these integrals can then be
treated as part of an observable.

The topological term, on the other hand, is very interesting and
notorious. While the self-interaction term is given by the sum over all
plaquette real parts, summing all plaquette imaginary parts yields
$2\pi\,Q[\Phi]$ where $Q[\Phi]$ is by definition the topological charge.
Note that this is the field theoretical analogue of \eqref{eq:topcharge}.
The topological charge contributes to the action as an additional summand
$\ri\theta\, Q[\Phi]$. The coefficient $\theta$ is called the vacuum angle
and is set to $\theta=0$ in the action we will discuss here. Physically
the value of $\theta$ distinguishes between superselection sectors. It is
therefore physically important to have numerical methods that can handle
non-zero values of $\theta$.

Non-zero values of $\theta$ are very prohibitive for many state-of-the-art
methods in computational physics because they render the action complex.
Hence, techniques like Markov chain Monte Carlo would require sampling
from complex ``probability'' distributions. Alternatively, the term could
be treated as a complex-valued observable which leads into the notorious
sign-problem \cite{GHJLV16a,GHJLV16b,Hartung:2020uuj,Troyer:2004ge}.
Although we are not explicitly treating this term here, it is important to
note that this topological term is nothing but a sine of angle differences
and thus shares all periodicity and differentiability properties of the
class of numerical problems we are studying in this work. Hence, the
methods developed here have the potential to address long-standing open
problems in high energy physics, going beyond the capabilities of
state-of-the-art methods.

\section{Recursive strategy for first order couplings} \label{sec:first}

In this section we review and extend the approach from \cite{AGHJLV16}.

The integral of interest takes the generic form \eqref{eq:int1} (not
restricted to the physics integrals) and can be written equivalently as
follows: with $\bsx = (x_0,\ldots,x_{L-1})$ and $x_L\equiv x_0$,
\ifdefined\journalstyle
    \begin{align} \label{eq:int2}
     \calI
     &\,=\, \int_D\cdots\int_D
     f_0\big(x_0,x_1\big)
     f_1\big(x_1,x_2\big)f_2\big(x_2,x_3\big)
     \cdots
     f_{L-1}\big(x_{L-1},x_0\big) \,\rd x_0 \cdots \rd x_{L-1} \nonumber\\
     &\,=\, \int_D \bigg[\int_D\cdots
     \left(\int_D
     \left(\int_D f_0\big(x_0,x_1\big) f_1\big(x_1,x_2\big)\,\rd x_1\right)
     f_2\big(x_2,x_3\big)
     \,\rd x_2 \right)
     \cdots f_{L-1}\big(x_{L-1},x_0\big) \,\rd x_{L-1} \bigg] \,\rd x_0.
    \end{align}
\else
    \begin{align} \label{eq:int2}
     \calI
     &\,=\, \int_D\cdots\int_D
     f_0\big(x_0,x_1\big)
     f_1\big(x_1,x_2\big)f_2\big(x_2,x_3\big)
     \cdots
     f_{L-1}\big(x_{L-1},x_0\big) \,\rd x_0 \cdots \rd x_{L-1} \nonumber\\
     &\,=\, \int_D \bigg[\int_D\cdots
     \left(\int_D
     \left(\int_D f_0\big(x_0,x_1\big) f_1\big(x_1,x_2\big)\,\rd x_1\right)
     f_2\big(x_2,x_3\big)
     \,\rd x_2 \right) \nonumber\\
     &\qquad\qquad\qquad\qquad\qquad\qquad\qquad\qquad\qquad\qquad\qquad
     \cdots f_{L-1}\big(x_{L-1},x_0\big) \,\rd x_{L-1} \bigg] \,\rd x_0.
    \end{align}
\fi
We say that this problem has \emph{first order couplings} because each
function $f_i$ depends on a variable $x_i$ and its next neighbor
$x_{i+1}$, with the assumed \emph{parametric periodicity} that $x_L\equiv
x_0$. The way we have grouped the integrals in \eqref{eq:int2} would imply
conceptually that we first integrate over $x_1$, and then $x_2$ and so on
until $x_{L-1}$, and finish with $x_0$. But the underlying parametric
periodicity means that there is no absolute ordering of variables. We
could start from any variable and go either forward or backward through
the sequence. Our choice to begin from $x_1$ is just for notational
convenience. The reason to leave $x_0$ for the last is because it appears
in both $f_0$ and $f_{L-1}$, far apart in our expression, and so a bit
clumsy if we were to tackle it first.

For physics integrals involving an observable function which has up to
first order couplings, we will end up with an extra factor for the product
function. This extra factor can be grouped with an existing factor
involving the same variables. Due to the parametric periodicity, without
loss of generality, we can assume that this observable function has been
grouped with the factor~$f_0$.

Following the recursive integration strategy described in \cite{AGHJLV16},
each one-dimensional integral in \eqref{eq:int2} is approximated by the
same one-dimensional quadrature rule with points $t_0,\ldots,t_{n-1}\in D$
and weights $w_0,\ldots,w_{n-1}\in \bbR$, to arrive at the approximation
\ifdefined\journalstyle
    \begin{align} \label{eq:prod2}
     \calQ
     &\,=\, \sum_{k_0=0}^{n-1} w_{k_0} \bigg[
     \sum_{k_{L-1}=0}^{n-1} w_{k_{L-1}} \cdots \bigg(\sum_{k_2=0}^{n-1} w_{k_2}
     \bigg(\sum_{k_1=0}^{n-1} w_{k_1} f_0\big(t_{k_0},t_{k_1}\big) f_1\big(t_{k_1},t_{k_2}\big)\bigg)
     f_2\big(t_{k_2},t_{k_3}\big)\bigg)
     \cdots
     f_{L-1}\big(t_{k_{L-1}},t_{k_0}\big) \bigg].
    \end{align}
\else
    \begin{align} \label{eq:prod2}
     \calQ
     &\,=\, \sum_{k_0=0}^{n-1} w_{k_0} \bigg[
     \sum_{k_{L-1}=0}^{n-1} w_{k_{L-1}} \cdots \bigg(\sum_{k_2=0}^{n-1} w_{k_2}
     \bigg(\sum_{k_1=0}^{n-1} w_{k_1} f_0\big(t_{k_0},t_{k_1}\big) f_1\big(t_{k_1},t_{k_2}\big)\bigg)
     f_2\big(t_{k_2},t_{k_3}\big)\bigg) \nonumber\\
     &\qquad\qquad\qquad\qquad\qquad\qquad\qquad\qquad\qquad\qquad\qquad\qquad\qquad\qquad
     \cdots
     f_{L-1}\big(t_{k_{L-1}},t_{k_0}\big) \bigg].
    \end{align}
\fi
An important fact which was perhaps not sufficiently emphasized in
\cite{AGHJLV16} is that this approximation \eqref{eq:prod2} is precisely
the tensor product rule \eqref{eq:prod1}. Therefore, the error of this
approximation is $\calO(n^{-\alpha})$ if the one-dimensional rule has
error $\calO(n^{-\alpha})$. Furthermore, if each $f_i$ is smooth and
periodic, then the rectangle rule (i.e., equally spaced points $t_k$ on
$D$ and equal weights $w_k = 1/n$) becomes the trapezoidal rule, and its
error converges in $\calO(n^{-\alpha})$ by the Euler--Maclauren formula,
where $\alpha$ is the smoothness order of the function (or even
exponentially fast in $n$ for analytic functions).

The expression~\eqref{eq:prod2} is equivalent to a time integrator in
the quantum rotor model (and other linear Schr\"odinger operators). This
is mainly due to the relatively simple connection between the Lagrangian
and Hamiltonian formulations of classical quantum mechanics after
discretization. These similarities disappear for non-trivial quantum field
theories, to the point at which the Hamiltonian becomes so complex, e.g.\
in lattice QCD, that it cannot serve as guiding principle any more.

Assuming that the cost of one evaluation of $f_i$ is $\calO(1)$, a naive
implementation of this method by direct calculation of \eqref{eq:prod2}
would have cost $\calO(n^L)$ and so suffers from the curse of
dimensionality. This is summarized under Scenario (A0) in
Table~\ref{tab1}. In the following, we first describe the efficient
strategy from \cite{AGHJLV16} which improves the cost to Scenarios
(A1)--(A4) in Table~\ref{tab1}. We then extend our discussion to even more
favorable Scenarios (A5)--(A7) to complete the table. Finally we explain
that Table~\ref{tab1} also holds when the one-dimensional domain $D$ is
replaced by an $s$-dimensional domain.

In Table~\ref{tab1} (and also later in Table~\ref{tab2}) we assume that
the chosen {\tt eig} procedure returns eigenvalues and eigenvectors to the
desired working precision and its cost can be expressed only in terms of
$n$. In a similar simplification, we assume also that the quadrature
weights in the matrix $W$ can be obtained up to working precision with
negligible cost. These assumptions will hold then also for the rest of
this paper.

\begin{table} [t]
\caption{\textbf{Cost of recursive strategy for first order couplings.}
$M_i$ is the $n\times n$ matrix of $f_i$ at quadrature points. $W$ is an
$n\times n$ diagonal matrix with quadrature weights on the diagonal. {\tt
eig} returns a diagonal matrix of eigenvalues. {\tt fft} takes the first
column of a circulant matrix and returns a diagonal matrix of eigenvalues.
In all cases the quadrature error is $\calO(n^{-\alpha})$, with $\alpha$
determined by the quadrature rule. The strategy extends to an $L$-fold
product of $s$-dimensional integrals with the quadrature rule replaced by
an $s$-dimensional cubature rule. The cost is independent of~$s$, while
the error is $\calO(n^{-\alpha})$, with $\alpha$ determined by the
cubature rule and with implied constant dependent on $s$.}
 \label{tab1}
 \begin{center}
 \setlength{\extrarowheight}{1pt}
 \begin{tabular}{|l|l|c|}
 \hline
 Scenario & Strategy & Cost \\
 \hline
 \hline
 (A0) naive implementation &
 $\begin{array}{l}
 \calQ = \mbox{direct product calculation}
 \end{array}$
 & $n^L$ \\
 \hline
 (A1) recursive integration &
 $\begin{array}{l}
  B = W^{1/2} M_0 W M_1 W \cdots M_{L-1} W^{1/2} \\
  \calQ = \sum_{k=0}^{n-1} B_{k,k}
 \end{array}$
 & $L\,n^{3}$ \\
 & & \vspace{-0.4cm} \\
 \hline
 (A2) $M_i = M$ &
 $\begin{array}{l}
  A = W^{1/2} M W^{1/2} \\
  B = A^L \\
  \calQ = \sum_{k=0}^{n-1} B_{k,k}
 \end{array}
 $
 & $\log(L)\,n^{3}$ \\
 & & \vspace{-0.4cm} \\
 \hline
 (A3) $M_i=M$ diagonalizable &
 $\begin{array}{l}
  A = W^{1/2} M W^{1/2} \\
  \Lambda = {\tt eig}(A) \\
  \calQ = \sum_{k=0}^{n-1} \Lambda_{k,k}^L
 \end{array}
 $
 & $n^{3}$ \\
 & & \vspace{-0.4cm} \\
 \hline
 (A4) $M_i = M$ except $M_0$ &
 $\begin{array}{l}
  A = W^{1/2} M W^{1/2} \\
  B = W^{1/2} M_0 W^{1/2} A^{L-1} \\
  \calQ = \sum_{k=0}^{n-1} B_{k,k}
 \end{array}
 $
 & $\log(L)\,n^{3}$ \\
 & & \vspace{-0.4cm} \\
 \hline
 (A5) $M_i$ circulant &
 $\begin{array}{l}
  \Lambda_i = {\tt fft}(M_i/n) \mbox{ for each $i$} \\
  \calQ = \sum_{k=0}^{n-1} \prod_{i=0}^{L-1} (\Lambda_i)_{k,k}
 \end{array}
 $
 & $L\,n\log(n)$ \\
 & & \vspace{-0.4cm} \\
 \hline
 (A6) $M_i = M$ circulant &
 $\begin{array}{l}
  \Lambda = {\tt fft}(M/n) \\
  \calQ = \sum_{k=0}^{n-1} \Lambda_{k,k}^L
 \end{array}
 $
 & $n\log(n)$ \\
 & & \vspace{-0.4cm} \\
 \hline
 \begin{tabular}{l}
 \!\!\!\!(A7) $M_i = M$ except $M_0$ \\
 \quad\;\, all circulant
 \end{tabular}
 &
 $\begin{array}{l}
  \Lambda_0 = {\tt fft}(M_0/n) \\
  \Lambda = {\tt fft}(M/n) \\
  \calQ = \sum_{k=0}^{n-1} (\Lambda_0)_{k,k}\, \Lambda_{k,k}^{L-1}
 \end{array}
 $
 & $n\log(n)$ \\
 \hline
\end{tabular}
\end{center}
\end{table}

\subsection{Recursive numerical integration} \label{sec:first-recur}

Let $M_i$ denote the $n\times n$ matrix with entries
\begin{align} \label{eq:Mi}
  (M_i)_{p,q} \,=\, f_i(t_p,t_q) \qquad\mbox{for}\quad p,q=0,\ldots,{n-1},
\end{align}
and let $W$ denote the $n\times n$ diagonal matrix with the weights
$w_0,\ldots,w_{n-1}$ on the diagonal. Then we can express the innermost
sum in \eqref{eq:prod2} as
\begin{align*}
  \sum_{k_1=0}^{n-1} w_{k_1} f_0\big(t_{k_0},t_{k_1}\big) f_1\big(t_{k_1},t_{k_2}\big)
  &\,=\, \sum_{k_1=0}^{n-1} (M_0)_{k_0,k_1}w_{k_1}^{1/2} \, w_{k_1}^{1/2} (M_1)_{k_1,k_2} \\
  &\,=\, \sum_{k_1=0}^{n-1} (M_0 W^{1/2})_{k_0,k_1}\, (W^{1/2} M_1)_{k_1,k_2}
  \,=\, (M_0 W M_1)_{k_0,k_2},
\end{align*}
where we used the properties that pre-multiplying by a diagonal matrix
scales the rows while post-multiplying by a diagonal matrix scales the
columns. In turn, we have
\begin{align*}
 \sum_{k_2=0}^{n-1} w_{k_2}
 \bigg(\sum_{k_1=0}^{n-1} w_{k_1} f_0\big(t_{k_0},t_{k_1}\big) f_1\big(t_{k_1},t_{k_2}\big)\bigg)
 f_2\big(t_{k_2},t_{k_3}\big)
  &\,=\,
  \sum_{k_2=0}^{n-1} w_{k_2} (M_0 W M_1)_{k_0,k_2} (M_2)_{k_2,k_3} \\
  &\,=\, (M_0 W M_1 W M_2)_{k_0,k_3}.
\end{align*}
This eventually leads to
\begin{align} \label{eq:fast1}
 \calQ
 &\,=\, \sum_{k_0=0}^{n-1} w_{k_0} (M_0 W M_1 W M_2 W \cdots M_{L-1})_{k_0,k_0} \nonumber\\
 &\,=\, \sum_{k_0=0}^{n-1} (W^{1/2} M_0 W M_1 W M_2 W \cdots M_{L-1} W^{1/2})_{k_0,k_0} \nonumber\\
 &\,=\, {\rm trace}(B),
 \qquad\mbox{with}\qquad B \,=\, W^{1/2} M_0 W M_1 W M_2 W \cdots M_{L-1} W^{1/2}.
\end{align}

Hence, it suffices to compute the matrix $B$ by successive matrix
multiplications, and then summing up the diagonal entries of $B$. Assuming
that the cost for multiplying two $n\times n$ dense matrices is
$\calO(n^\mu)$ with $\mu\le 3$, the cost of the recursive strategy is
$\calO(L\,n^\mu)$. This is summarized as Scenario (A1) in
Table~\ref{tab1}.

For physics integrals involving an observable function which has up to
first order couplings, as we explained before this can be grouped with the
factor $f_0$. So this situation is also covered by Scenario (A1).

For numerical stability, it may be necessary to scale the intermediate
matrix multiplications when implementing this method. We recommend
scaling the matrix so that a prescribed norm is $1$.

\subsection{Identical matrices} \label{sec:first-same}

In the special case where all the functions $f_i$ are equal so that all
matrices are identical, $M_i = M$, we have
\[
 B \,=\, A^L, \qquad\mbox{with}\qquad A \,=\, W^{1/2} M W^{1/2}.
\]
It suffices to compute the $L$th power of $A$ (using e.g., the method of
``exponentiation by squaring'') and then sum up the diagonal entries of
the resulting matrix. The cost is then $\calO(\log(L)\,n^\mu)$. This is
summarized as Scenario (A2) in Table~\ref{tab1}.

If $A$ is diagonalizable, that is,
\[
  A \,=\, P\,\Lambda\, P^\top,
\]
with $P$ an orthogonal matrix and with the eigenvalues of $A$ on the
diagonal of $\Lambda$, then since $B = A^L = P\,\Lambda^L\, P^\top$ we
conclude that
\begin{equation*} %\label{eq:fast2}
  \calQ \,=\, {\rm trace}(A^L) \,=\, {\rm trace}(\Lambda^L) \,=\, \sum_{k=0}^{n-1} \Lambda_{k,k}^L.
\end{equation*}
Thus we just need to find the eigenvalues of $A$, raise each of them to
the $L$th power, and then sum them up. The cost is then $\calO(n^\xi)$,
which is dominated by the cost for the eigenvalue decomposition, generally
with $\xi\le 3$. This is summarized as Scenario (A3) in Table~\ref{tab1}.
Such a scenario can occur when the functions $f_i$ are symmetric, i.e.,
$f_i(u,v) = f_i(v,u)$. It is then interesting to see whether strategy (A2)
or (A3) is more efficient in practice.

For an integral with an observable function that has been grouped with
$f_0$, if all other functions $f_i$ are equal, then we arrive at Scenario
(A4) in Table~\ref{tab1}, which effectively has the same cost as Scenario
(A2).

\subsection{Cost saving by FFT} \label{sec:first-fft}

We now extend the strategy beyond \cite{AGHJLV16}. If
\begin{enumerate}
\item each function $f_i$ depends only on the difference of the two
    arguments, i.e., $f_i(u,v) = \kappa_i(v-u)$ for some function
    $\kappa_i$, and
\item each function $\kappa_i$ is \emph{periodic}, and
\item we have equally spaced points with equal weights $1/n$ (i.e., we
    have the rectangle rule),
\end{enumerate}
then the matrix $M_i$ is \emph{circulant} so that FFT can be used to find
the eigenvalues.

While the restriction $f_i(u,v)=\kappa_i(v-u)$ may seem very
restrictive at first glance, it should be noted that $\sigma$-models and
gauge theories satisfy this structure.

If the functions $f_i$ are different, then analogously to \eqref{eq:fast1}
we write (leaving aside the weights to be taken into account at the end)
\[
  B \,=\, M_0 M_1\cdots M_{L-1},
  \qquad\mbox{with}\qquad M_i \,=\, \Fourier\, \Lambda_i\, \Fourier^{\dagger},
\]
where $\Fourier$ is the unitary Fourier matrix and $\dagger$ is the
Hermitian adjoint. Since the Fourier matrix is unitary, we have
\[
 B \,=\, \Fourier\, \Lambda_0\, \Lambda_1 \cdots \Lambda_{L-1}\, \Fourier^{\dagger},
\]
and thus
\[
  \calQ \,=\, n^{-L}\, {\rm trace}(B)
  \,=\, n^{-L} \sum_{k=0}^{n-1} (\Lambda_0)_{k,k} (\Lambda_1)_{k,k} \cdots (\Lambda_{L-1})_{k,k}.
\]
So we carry out FFT on the first column of each of the matrices
$M_0,M_1,\ldots,M_{L-1}$ to find their eigenvalues, multiply the resulting
diagonal matrices elementwise, and then sum up the resulting diagonal and
divide by $n^L$. The cost is therefore $\calO(L\,n\log(n))$. This is
summarized as Scenario (A5) in Table~\ref{tab1}.

If all the functions $f_i$ are equal, then we carry out FFT only once to
find the eigenvalues of the common matrix $M$, raise each eigenvalue to
the power $L$, and then sum up the results. The cost is then reduced to
$\calO(n\log(n))$. This is summarized as Scenario (A6) in
Table~\ref{tab1}.

If an observable function is present as explained, then $f_0$ is different
while the rest of the $f_i$ are equal. In this case, with $M_0 =
\Fourier\,\Lambda_0\, \Fourier^{\dagger}$ and $M = \Fourier\,\Lambda\,
\Fourier^{\dagger}$, we have
\[
 B \,=\, \Fourier\, \Lambda_0\, \Lambda^{L-1}\, \Fourier^{\dagger},
\]
and thus
\[
  \calQ \,=\, n^{-L}\, {\rm trace}(B) \,=\, n^{-L} \sum_{k=0}^{n-1} (\Lambda_0)_{k,k}\,\Lambda_{k,k}^{L-1}.
\]
The cost is again $\calO(n\log(n))$, and this is summarized as Scenario
(A7) in Table~\ref{tab1}. In all cases it is numerically better to perform
the scaling by weights in each step, see Table~\ref{tab1}.
Additionally, we recommend scaling the columns of the circulant
matrices to have a prescribed vector norm of $1$, see the constant
\texttt{c} in the Julia code in Section~\ref{sec:Julia}.

\subsection{Extension to the $L$-fold product of $s$-dimensional integrals}
\label{sec:first-ext}

We conclude this section by noting that the recursive strategy extends
easily to the situation where the domain $D$ in \eqref{eq:int2} is
replaced by an $s$-dimensional domain $D^s$ as in \eqref{eq:intLsb}, or
equivalently,
\begin{align} \label{eq:intLs}
 \calI
 &\,=\, \int_{D^s} \cdots \int_{D^s}
 \prod_{i=0}^{L-1} f_i\big(\bsx_i,\bsx_{i+1}\big) \,\rd\bsx_0\cdots\rd\bsx_{L-1},
\end{align}
where $\bsx_i = (x_{i,0},\ldots,x_{i,s-1})\in D^s$, with
\[
  x_{i,j} \equiv x_{i\bmod L,\; j\bmod s} \qquad \mbox{for all}\quad i,j\in\bbN.
\]

In this case the one-dimensional quadrature rule in \eqref{eq:prod2}
becomes an $s$-dimensional cubature rule with points
$\bst_0,\ldots,\bst_{n-1} \in D^s$ and weights
$w_0,\ldots,w_{n-1}\in\bbR$, and the matrices $M_i$ in \eqref{eq:Mi}
become
\begin{align*}
  (M_i)_{p,q} \,=\, f_i(\bst_p,\bst_q) \qquad\mbox{for}\quad p,q=0,\ldots,{n-1}.
\end{align*}
Thus we have again Scenarios~(A0)--(A4) as in Table~\ref{tab1}.

Sufficient conditions to arrive at circulant matrices for the more
favorable Scenarios~(A5)--(A7) are as follows:
\begin{enumerate}
\item each function $f_i$ depends only on the difference of the two
    arguments, i.e., $f_i(\bsu,\bsv) = \kappa_i(\bsv-\bsu)$ for some
    function $\kappa_i : D^s \to \bbR$, and
\item each function $\kappa_i$ is \emph{periodic with respect to each
    of the $s$ components}, and
\item we have a \emph{lattice cubature rule} $\bst_k = (k\bsz\bmod
    n)/n$ for each $k=0,\ldots,n-1$, see \eqref{eq:lat}.
\end{enumerate}
The group structure of lattice cubature rule means that the difference of
two lattice points is another lattice point. Combining this with equal
weights $1/n$ gives us circulant matrices $M_i$. This is the main
motivation here for favoring lattice cubature rules above all other
cubature rules!

We stress that the cost in all scenarios is \emph{independent of $s$}.
However, the error is $\calO(n^{-\alpha})$, where $\alpha$ is determined
by the cubature rule and the implied constant may depend on~$s$.

\section{Recursive strategy for higher order couplings} \label{sec:higher}

Consider now an integrand which is a product of factors involving
\emph{higher order couplings of order $r$}, with $1\le r\le L-1$, with the
underlying parametric periodicity that $x_i \equiv x_{i\bmod L}$ for all
$i\in\bbN$,
\begin{align} \label{eq:intr}
 \calI
 &\,=\, \int_{D^L} \prod_{i=0}^{L-1} f_i\big(x_i,x_{i+1},\ldots,x_{i+r}\big)
 \,\rd \bsx \\
 &\,=\, \int_D\cdots\int_D
 f_0\big(x_0,x_1,\ldots,x_r\big)f_1\big(x_1,x_2,\ldots,x_{r+1}\big)
 \cdots f_{r}\big(x_{r},x_{r+1},\ldots,x_{2r}\big) \nonumber\\
 &\qquad\qquad\qquad\qquad\qquad\qquad\qquad\qquad\qquad\qquad
 \cdots
 f_{L-1}\big(x_{L-1},x_0,x_1,\ldots,x_{r-1}\big) \,\rd x_0 \cdots \rd x_{L-1}. \nonumber
\end{align}
In this section we generalize the recursive strategies from
Section~\ref{sec:first} using \emph{a tensor product of $r$-dimensional
cubature rules}. Our approach is essentially to turn the given integral
into an $(L/r)$-fold product of $r$-dimensional integrals, a formulation
that we discussed in Section~\ref{sec:first-ext} (replacing $L$ by $L/r$
and $s$ by $r$). To avoid confusion, we summarize our findings in
Table~\ref{tab2} for easy comparison.

For simplicity of presentation we will explain this by considering the
special case $L=14$ and $r=3$:
\begin{align} \label{eq:I-14-3}
 \calI
 &\,=\, \int_D\cdots\int_D
 f_0\big(x_0,x_1,x_2,x_3\big)\, f_1\big(x_1,x_2,x_3,x_4\big)\, f_2\big(x_2,x_3,x_4,x_5\big) \nonumber\\
 &\qquad\qquad\qquad
 f_3\big(x_3,x_4,x_5,x_6\big)\, f_4\big(x_4,x_5,x_6,x_7\big)\, f_5\big(x_5,x_6,x_7,x_8\big) \nonumber\\
 &\qquad\qquad\qquad
 f_6\big(x_6,x_7,x_8,x_9\big)\, f_7\big(x_7,x_8,x_9,x_{10}\big)\,f_8\big(x_8,x_9,x_{10},x_{11}\big) \nonumber\\
 &\qquad\qquad\qquad
 f_9\big(x_9,x_{10},x_{11},x_{12}\big)\,f_{10}\big(x_{10},x_{11},x_{12},x_{13}\big)\, f_{11}\big(x_{11},x_{12},x_{13},x_0\big) \nonumber\\
 &\qquad\qquad\qquad\qquad\qquad\qquad
 f_{12}\big(x_{12},x_{13},x_0,x_1\big)\, f_{13}\big(x_{13},x_0,x_1,x_2\big)
 \,\rd x_0 \cdots \rd x_{13}.
\end{align}
We have deliberately chosen a value of $L$ that is not a multiple of $r$.

We group every three ($=r$) consecutive variables together as follows:
\begin{align*}
 \calI
 &\,=\, \int_{D^3} \int_{D^3} \int_{D^3} \int_{D^3}
 \theta_0\big((x_0,x_1,x_2),(x_3,x_4,x_5)\big)\\
 &\qquad\qquad\qquad\qquad\quad
 \theta_1\big((x_3,x_4,x_5),(x_6,x_7,x_8)\big)\\
 &\qquad\qquad\qquad\qquad\quad
 \theta_2\big((x_6,x_7,x_8),(x_9,x_{10},x_{11})\big)\\
 &\qquad\qquad\qquad\qquad\quad
 \theta_3\big((x_9,x_{10},x_{11}),(x_0,x_1,x_2)\big) \\
 &\qquad\qquad\qquad\qquad\quad
 \,\rd (x_0,x_1,x_2) \,\rd (x_3,x_4,x_5)\,\rd(x_6,x_7,x_8)\,\rd(x_9,x_{10},x_{11}),
\end{align*}
where we defined
\begin{align*}
 \theta_0\big((x_0,x_1,x_2),(x_3,x_4,x_5)\big)
 &\,:=\, f_0\big(x_0,x_1,x_2,x_3\big)\, f_1\big(x_1,x_2,x_3,x_4\big)\, f_2\big(x_2,x_3,x_4,x_5\big)
 \\
 \theta_1\big((x_3,x_4,x_5),(x_6,x_7,x_8)\big)
 &\,:=\,
 f_3\big(x_3,x_4,x_5,x_6\big)\, f_4\big(x_4,x_5,x_6,x_7\big)\, f_5\big(x_5,x_6,x_7,x_8\big)
 \\
 \theta_2\big((x_6,x_7,x_8),(x_9,x_{10},x_{11})\big)
 &\,:=\,
 f_6\big(x_6,x_7,x_8,x_9\big)\, f_7\big(x_7,x_8,x_9,x_{10}\big)\,f_8\big(x_8,x_9,x_{10},x_{11}\big),
\end{align*}
with the exceptional last one
\begin{multline*}
  \theta_3((x_9,x_{10},x_{11}),(x_0,x_1,x_2))
  \,:=\,
  \int_D\int_D
  f_9\big(x_9,x_{10},x_{11},x_{12}\big)\,f_{10}\big(x_{10},x_{11},x_{12},x_{13}\big)
  \\
 f_{11}\big(x_{11},x_{12},x_{13},x_0\big)\,
 f_{12}\big(x_{12},x_{13},x_0,x_1\big)\, f_{13}\big(x_{13},x_0,x_1,x_2\big)
  \,\rd x_{12} \,\rd x_{13},
\end{multline*}
which took care of the remaining factors that arise because $L=14$ is not
a multiple of $r=3$.

\begin{table} [t]
\caption{\textbf{Cost of recursive strategy for order $r$ couplings.} $L$
is a multiple of $r$. $M_i$ is the $n\times n$ matrix of $\theta_i$ at
cubature points. $W$ is an $n\times n$ diagonal matrix with cubature
weights on the diagonal. {\tt eig} returns a diagonal matrix of
eigenvalues. {\tt fft} takes the first column of a circulant matrix and
returns a diagonal matrix of eigenvalues. In all cases the error is
$\calO(n^{-\alpha})$, with $\alpha$ determined by the cubature rule and
the implied constant dependent on $r$.}
 \label{tab2}
 \begin{center}
 \setlength{\extrarowheight}{1pt}
 \begin{tabular}{|l|l|c|}
 \hline
 Scenario & Strategy & Cost \\
 \hline\hline
 (B1) recursive integration &
 $\begin{array}{l}
  B = W^{1/2} M_0 W M_1 W M_2 \cdots M_{L/r-1} W^{1/2} \\
  \calQ = \sum_{k=0}^{n-1} B_{k,k}
 \end{array}$
 & $L\,n^3$ \\
 & & \vspace{-0.4cm} \\
 \hline
 (B2) $M_i = M$ &
 $\begin{array}{l}
   A = W^{1/2} M W^{1/2} \\
  B = A^{L/r} \\
  \calQ = \sum_{k=0}^{n-1} B_{k,k}
 \end{array}
 $
 & $\log(L/r)\,n^3$ \\
 & & \vspace{-0.4cm} \\
 \hline
 (B3) $M_i=M$ diagonalizable &
 $\begin{array}{l}
   A = W^{1/2} M W^{1/2} \\
   \Lambda = {\tt eig}(A) \\
  \calQ = \sum_{k=0}^{n-1}  \Lambda_{k,k}^{L/r}
 \end{array}
 $
 & $n^3$\\
 & & \vspace{-0.4cm} \\
 \hline
 (B4) $M_i = M$ except $M_0$ &
 $\begin{array}{l}
  A = W^{1/2} M W^{1/2} \\
  B = W^{1/2} M_0 W^{1/2}\, A^{L/r-1} \\
  \calQ = \sum_{k=0}^{n-1} B_{k,k}
 \end{array}
 $
 & $\log(L/r)\,n^3$ \\
 & & \vspace{-0.4cm} \\
 \hline
 (B5) $M_i$ circulant &
 $\begin{array}{l}
  \Lambda_i = {\tt fft}(M_i/n)  \mbox{ for each $i$} \\
  \calQ = \sum_{k=0}^{n-1} \prod_{i=0}^{L/r-1} (\Lambda_i)_{k,k}
 \end{array}
 $
 & $(L/r)\,n\log(n)$ \\
 & & \vspace{-0.4cm} \\
 \hline
 (B6) $M_i = M$ circulant &
 $\begin{array}{l}
  \Lambda = {\tt fft}(M/n) \\
  \calQ = \sum_{k=0}^{n-1} \Lambda_{k,k}^{L/r}
 \end{array}
 $
 & $n\log(n)$ \\
 & & \vspace{-0.4cm} \\
 \hline
 \begin{tabular}{l}
 \!\!\!\!(B7) $M_i = M$ except $M_0$ \\
 \quad\;\, all circulant
 \end{tabular}
 &
  $\begin{array}{l}
  \Lambda_0 = {\tt fft}(M_0/n) \\
  \Lambda = {\tt fft}(M) \\
  \calQ = \sum_{k=0}^{n-1} (\Lambda_0)_{k,k}\,\Lambda_{k,k}^{L/r-1}
 \end{array}
 $
 & $n\log(n)$ \\
 \hline
\end{tabular}
\end{center}
\end{table}

Next we apply a $3$-dimensional cubature rule with $n$ points
$\bst_0,\ldots,\bst_{n-1}$ and weights $\omega_0,\ldots$, $\omega_{n-1}$
to each integral over $D^3$, to obtain
\begin{align} \label{eq:familiar}
 \calQ
 &\,=\, \sum_{k_0=0}^{n-1} \omega_{k_0} \sum_{k_3=0}^{n-1} \omega_{k_3} \sum_{k_6=0}^{n-1} \omega_{k_6} \sum_{k_9=0}^{n-1} \omega_{k_9}\,
 \theta_0\big(\bst_{k_0},\bst_{k_3}\big)\,
 \theta_1\big(\bst_{k_3},\bst_{k_6}\big)\,
 \theta_2\big(\bst_{k_6},\bst_{k_9}\big)\,
 \widetilde{\theta}_3\big(\bst_{k_9},\bst_{k_0}\big),
\end{align}
with $\theta_3$ approximated by $\widetilde{\theta}_3$, obtained by the
same cubature rule (projected to two dimensions)
\begin{multline*}
  \widetilde{\theta}_3\big((x_9,x_{10},x_{11}),(x_0,x_1,x_2)\big)
  \,:=\,
  \sum_{k=0}^{n-1} \omega_k\,
  f_9\big(x_9,x_{10},x_{11},t_{k,1}\big)\,f_{10}\big(x_{10},x_{11},t_{k,1},t_{k,2}\big)
  \\
 f_{11}\big(x_{11},t_{k,1},x_{13},x_0\big)\,
 f_{12}\big(t_{k,1},t_{k,2},x_0,x_1\big)\, f_{13}\big(t_{k,2},x_0,x_1,x_2\big).
\end{multline*}
Observe that the expression \eqref{eq:familiar} takes the same form as
\eqref{eq:prod2}.

In general, if $L$ is a multiple of $r$, then we rewrite the integral
\eqref{eq:intr} in the form
\begin{align*}
 \calI
 &\,=\, \int_{D^r} \cdots \int_{D^r}
 \prod_{k=0}^{L/r-1} \theta_i\big(\bsy_i,\bsy_{i+1}\big) \,\rd\bsy_0 \cdots \rd\bsy_{L/r},
\end{align*}
where $\bsy_i = (x_{ri},x_{ri+1},\cdots,x_{ri+r-1})$, with $\bsy_i \equiv
\bsy_{i\bmod L/r}$, and
\[
 \theta_i(\bsu) \,:=\, \theta_i(\bsu_{0:r-1},\bsu_{r:2r-1})
 \,:=\, \prod_{k=0}^{r-1} f_{ri+k}(u_k,u_{k+1},\ldots,u_{k+r}).
\]
Thus, with a general $r$-dimensional cubature rule, the matrices of
interest are now
\[
  (M_i)_{p,q} \,=\, \theta_i(\bst_p,\bst_q), \qquad p,q=0,\ldots,n-1,
\]
and as before $W$ denotes the $n\times n$ diagonal matrix with the weights
$w_0,\ldots,w_{n-1}$ on the diagonal. Then similarly to \eqref{eq:fast1}
we obtain
\[
  \calQ \,=\, {\rm trace}(B),
  \qquad B \,=\, W^{1/2} M_0 W M_1 W M_2 \cdots M_{L/r-1} W^{1/2},
\]
which leads to the scenarios in Table~\ref{tab2}, completely analogous to
Table~\ref{tab1}. When $L$ is not a multiple of~$r$, one of the matrices
will need to be adjusted as we have demonstrated in
$\widetilde{\theta}_3$. Without loss of generality, for notational
convenience we can make $M_0$ the adjusted one.

A noteworthy difference between Scenarios~(B5)--(B7) in Table~\ref{tab2}
and Scenarios~(A5)--(A7) in Table~\ref{tab1} is that the matrices $M_i$
are now determined by the functions $\theta_i$ which in turn are formed by
products of the functions $f_i$. Currently we are not aware of sufficient
conditions on $f_i$ that will lead to circulant matrices $M_i$. So it is
possible that Scenarios~(B5)--(B7) are unreachable.

\section{Application to the quantum rotor} \label{sec:app-rotor}

\subsection{1D first order couplings} \label{sec:Julia}

We now apply the recursive strategy to the quantum rotor problem. Both
integrals for the numerator and denominator of our ratio of interest
\eqref{eq:rotor} are of the form
\[
  \int_{D^L} \prod_{i=0}^{L-1} f_i(x_{i+1}-x_i)\,\rd\bsx,
\]
with (after a change of variables) $D = [0,1]$ and
\[
  f_i(x) = f(x) = \exp(\beta\cos(2\pi x))
  \qquad\mbox{for all } i=0,\ldots, L-1,
\]
except that for the numerator we will replace $f_0$ by
\[
  f_0(x) = \cos(2\pi x)\, \exp(\beta\cos(2\pi x)).
\]
Note our abuse of notation here: comparing with \eqref{eq:int2} we have
the special case that $f_i(u,v) = \kappa_i(v-u) \equiv f_i(v-u)$, i.e., it
can be treated as a function of a single variable (of the difference of
the two arguments). Each function $\kappa_i \equiv f_i$ is periodic so we
know from Section~\ref{sec:first-fft} that with the rectangle rule we have
Scenario~(A7) in Table~\ref{tab1}.

Executable Julia code for this calculation is given below.

\begin{small}
\begin{verbatim}
f(beta, x)  = exp(beta * cospi(2*x))
f0(beta, x) = cospi(2*x) * f(beta, x)

function calc_U1_1d(beta::T, L::Int=10, n::Int=2^5) where {T <: AbstractFloat}
    t  = (T(0):n-1)/n                     # discretize on these points (type T)
    c  = sum( f.(beta, t)/n )             # scaling
    F  = fft( f.(beta, t)/n/c )  |> real  # Fourier coefficients of f
    F0 = fft( f0.(beta, t)/n/c ) |> real  # Fourier coefficients of f0
    Qnum = sum( F.^(L-1) .* F0 )
    Qden = sum( F.^L )
    ratio = Qnum / Qden
end
\end{verbatim}
\end{small}

Similar to Matlab, Julia allows ``broadcasting'' operations over all
elements of an array by using the ``dot syntax''. In Julia this syntax is
extended to any function by appending a dot to the function name, e.g.,
\texttt{f.(beta, t)} for a vector \texttt{t}. We will use this syntax in
Section~\ref{sec:app-QED} to do a calculation for a selection of arguments
$\beta$, $L$ and $n$. The type \texttt{T} of the parameter $\beta$ defines
the floating point type used throughout the calculation, allowing for
arbitrary precision.

\subsection{1D higher order couplings}

The action in \eqref{eq:rotor} arose from approximating a first derivative
by a forward difference formula $(x_{i+1}-x_i)/h$ of order $h$. If we use
now the central difference formula $(x_{i+1}-x_{i-1})/(2h)$ of
order~$h^2$, then we would end up with the denominator (now with $\beta =
IL^2/(4T^2)$)
\[
  \int_{D^L} \prod_{i=0}^{L-1} f\big(x_{i+1}-x_{i-1}\big)\,\rd\bsx.
\]
At first glance this appears to be a problem with order $2$ couplings, but
it can be simplified. If $L$ is even, then the variables completely
decouple into even and odd indices, and the integral can be written as the
product
\[
  \bigg(\int_{D^{L/2}} \prod_{\satop{i=0}{\rm even}}^{L-1} f(x_{i+1}-x_{i-1})\,\rd\bsx_{\rm even}\bigg)
  \bigg(\int_{D^{L/2}} \prod_{\satop{i=0}{\rm odd}}^{L-1} f(x_{i+1}-x_{i-1})\,\rd\bsx_{\rm odd}\bigg).
\]
So after reparametrization this becomes
\[
  \bigg(\int_{D^{L/2}} \prod_{j=0}^{L/2-1} f\big(y_{j+1}-y_j\big)\,\rd\bsy\bigg)^2,
\]
with $y_j \equiv y_{j\bmod (L/2)}$ for all $j\in\bbZ$, which is
essentially a first order problem. If $L$ is odd, then the variables can
be relabelled in the order of $x_0,x_2,x_4,\ldots,x_{L-1},x_1,\ldots,x_L$,
and we get
\[
  \int_{D} \prod_{j=0}^{L-1} f(y_{j+1}-y_j)\,\rd\bsy,
\]
which is exactly the first order problem. For the numerator we just need
to adjust for the different function $f_0$ but the general principle is
the same.

We can consider other higher order finite difference approximations to the
first derivative to get higher order couplings. For example, the central
difference formula $(- x_{i+2} + 8x_{i+1}-8x_{i-1}+ x_{i-2})/(12h)$ of
order~$h^4$ leads to (now with $\beta = IL^2/(144T^2)$)
\[
  \int_{D^L} \prod_{i=0}^{L-1} f\big(-x_{i+2} + 8x_{i+1}-8x_{i-1}+ x_{i-2}\big)\,\rd\bsx,
\]
which has order $r=4$ couplings and we can follow the strategy in
Section~\ref{sec:higher}. We will need to form the functions $\theta_i$ by
taking products of the functions $f_i$. We can apply Scenario (B4) of
Table~\ref{tab2} using a $4$-dimensional lattice cubature rule.

\subsection{Extension to topological susceptibility and beyond}
\label{sec:topsus}

Unfortunately, the numerical treatment of observables such as the
topological susceptibility \eqref{eq:topcharge} is a little bit more
involved than our results in Table~\ref{tab1} will indicate. The culprit
is the square in \eqref{eq:topsusc} outside the sum in
\eqref{eq:topcharge}. This formally breaks the assumed low-order coupling
structure. However, the square can be expanded into a double sum and leads
to the results of Table~\ref{tab1} needing to be applied to each of the
$L^2$ summands separately. In other words, the non-triviality of locally
defined observables capturing global properties translates into splitting
one global observable into many local observables which can then be
treated according to Table~\ref{tab1}. The topological
susceptibility~\eqref{eq:topsusc}, in particular, can be solved naively
using $L^2$ integration problems as in Scenario (A1) in Table~\ref{tab1}.
Using translational invariance of the indices, that is, the model is not
changed if each $\phi_i$ is replaced with $\phi_{i+1}$, we can furthermore
reduce the computational cost to $L$ integration problems similar to
Scenario (A4). The difference to Scenario (A4) is that now both $M_0$ and
one additional $M_i$ are different from $M$. In summary, while such global
observables may not be as efficiently solvable as purely local
observables, we only incur an overhead cost polynomial in the lattice size
$L$. The same methodology can be applied to other kinds of
susceptibilities, e.g., the magnetic susceptibility or the specific heat.

\section{Application to compact $\U(1)$ lattice gauge theory} \label{sec:app-QED}

\subsection{2D first order couplings}

The 2D compact $\U(1)$ lattice gauge theory model \eqref{eq:QED2D} takes
the generic form \eqref{eq:int2D}. We begin by separating out the
variables in the $a$-direction and the $b$-direction
\begin{align*}
  \calI
  &\,=\,
  \int_{D^{L^2}} \int_{D^{L^2}} \prod_{i=0}^{L-1} \prod_{j=0}^{L-1}
  f_{i,j}\left(x^a_{i,j} - x^a_{i,j+1} - x^b_{i,j} + x^b_{i+1,j} \right)
  \,\rd\bsx^a \,\rd\bsx^b \\
  &\,=\,
  \int_{D^{L^2}} \prod_{i=0}^{L-1}
  \bigg(
  \underbrace{
  \int_{D^L} \prod_{j=0}^{L-1}
  f_{i,j}\left(x^a_{i,j} - x^a_{i,j+1} - x^b_{i,j} + x^b_{i+1,j} \right)
  \,\rd\bsx^a_i}_{\mbox{$=:\,g_i\left(\bsx^b_{i+1}-\bsx^b_i\right)$}}
  \bigg)
  \,\rd\bsx^b,
\end{align*}
where we used the crucial fact that each factor over the index $i$ depends
only on $\bsx^a_i$, so that the integral over $\bsx^a\in D^{L^2}$ becomes
a product of the integrals over $\bsx^a_i =
(x^a_{i,0},\ldots,x^a_{i,L-1})\in D^L$. We can therefore reduce the
problem to
\begin{align} \label{eq:outer}
  \calI
   &\,=\,
  \int_{D^L} \cdots \int_{D^L}\prod_{i=0}^{L-1}
  g_i\left(\bsy_{i+1}-\bsy_i\right)
  \,\rd\bsy_0 \cdots \rd\bsy_{L-1},
\end{align}
where
\begin{align} \label{eq:inner}
  g_i(\bsy)
  &\,:=\,
  \int_{D^L} \prod_{j=0}^{L-1}
  f_{i,j}\left(x_j - x_{j+1} + y_j\right)
  \,\rd\bsx.
\end{align}

Observe that the outer integral \eqref{eq:outer} involves first order
couplings of the form \eqref{eq:intLs} with $s$ replaced by $L$, while the
inner integral \eqref{eq:inner} involves first order couplings of the form
\eqref{eq:int2} for each input $i$ and $\bsy$. Thus Table~\ref{tab1}
applies for both integrals. If all the functions $f_{i,j}$ are periodic
then so are the functions $g_i$. In this case we have Scenarios (A5)--(A7)
by using an $n$-point rectangle rule for the inner integral and an
$N$-point lattice cubature rule for the outer integral. The cost when all
functions are the same is then of the order
\[
   N\,\log(N) + N\, n\,\log(n)\,,
\]
which is independent of $L$. The error is of order $N^{-\alpha} +
n^{-\alpha}$, where $\alpha$ depends on the smoothness of the functions
and the underlying lattice rule, and the implied constant may depend
exponentially on $L$.

But more savings are possible as we explain below.

\begin{lemma} \label{lem1}
If the functions $f_{i,j}$ are periodic, then the inner integral
\eqref{eq:inner} simplifies to
\begin{align} \label{eq:inner2}
  g_i(\bsy)
  \,=\, g_i\Big(\textstyle\sum_{j=0}^{L-1}  y_j,0,\ldots,0\Big)
  \,=\, g_i\Big(\textstyle\sum_{j=0}^{L-1}  y_j,\bszero\Big),
\end{align}
that is, $g_i(\bsy)$ depends only on the sum of the components of $\bsy$.
\end{lemma}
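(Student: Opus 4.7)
The plan is to prove the lemma by executing a clever change of variables that absorbs all but one of the $y_j$-shifts. The key observation is that the integrand depends on the $y_j$ only through the telescoping-like differences $x_j - x_{j+1}$, which inherently live on the hyperplane $\{\bsd : \sum_j d_j = 0\}$ modulo the period. So any coordinated shift of the $x_j$ variables that respects the parametric periodicity $x_L\equiv x_0$ will redistribute the $y_j$'s in a constrained way.

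First, I would introduce the substitution $x_j = u_j - c_j$ for $j=0,\ldots,L-1$ (with $c_L$ defined by $c_L=c_0$ to match the parametric periodicity), transforming the argument of the $j$th factor into
\[
x_j - x_{j+1} + y_j \;=\; (u_j - u_{j+1}) + (c_{j+1}-c_j + y_j).
\]
Next, I would select $c_j$ to kill the shifts for $j\ge 1$ by imposing $c_{j+1} - c_j = -y_j$ for $j=1,\ldots,L-1$. Telescoping these equations yields $c_L - c_1 = -\sum_{j=1}^{L-1} y_j$, and combining with the consistency condition $c_L=c_0$ gives $c_1 - c_0 = \sum_{j=1}^{L-1} y_j$. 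Substituting back into the $j=0$ factor, its argument becomes $(u_0-u_1) + (c_1 - c_0 + y_0) = (u_0 - u_1) + \sum_{j=0}^{L-1} y_j$, exactly the claimed ``all mass in the first coordinate'' configuration.

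Finally, I would justify that this change of variables leaves the integral invariant. Because each $f_{i,j}$ is periodic with period matching $D$ and the map $x_j\mapsto u_j$ is a translation on the torus, by periodicity the integral over $D^L$ is unchanged. Putting these together yields
\[
g_i(\bsy) \;=\; \int_{D^L} f_{i,0}\!\Big(u_0 - u_1 + \textstyle\sum_{k=0}^{L-1} y_k\Big)\prod_{j=1}^{L-1} f_{i,j}(u_j - u_{j+1})\,\rd\bsu \;=\; g_i\Big(\textstyle\sum_{j=0}^{L-1} y_j,\bszero\Big),
\]
which is the desired identity.

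The main obstacle is bookkeeping: ensuring that the chosen shifts $c_j$ are consistent with the parametric periodicity $c_L=c_0$, and verifying that the ``defect'' from this consistency constraint reappears in precisely the first coordinate as the total sum $\sum_j y_j$, rather than being spread across several coordinates. Everything else is a routine application of periodicity to move the shifts through the integral.
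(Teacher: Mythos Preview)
Your proposal is correct and essentially identical to the paper's own proof: both perform the translation $x_j = u_j - c_j$ (the paper writes $u_j = x_j + \Delta_j$, so $c_j = \Delta_j$), choose the shifts so that $c_{j+1}-c_j = -y_j$ for $j\ge 1$, and observe that the residual shift $\sum_{j=0}^{L-1} y_j$ lands in the $j=0$ factor while periodicity justifies the unchanged domain of integration. The only cosmetic difference is that the paper fixes $\Delta_0=0$ and solves forward, whereas you phrase the same computation via the telescoping constraint $c_L=c_0$.
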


\begin{proof}
With $\bsDelta\in \bbR^L$ to be specified later, we introduce a change of
variables $u_j = x_j + \Delta_j$ in \eqref{eq:inner} to obtain (with all
indexing to be interpreted modulo $L$)
\begin{align*}
  g_i(\bsy)
  &\,=\,
  \int_{x_{L-1}=0}^1 \cdots \int_{x_0=0}^1 \prod_{j=0}^{L-1}
  f_{i,j}\left(x_j - x_{j+1} + y_j\right)
  \,\rd x_0 \cdots \rd x_{L-1} \\
  &\,=\,
  \int_{u_{L-1}=\Delta_{L-1}}^{1+\Delta_{L-1}} \cdots \int_{u_0=\Delta_0}^{1+\Delta_0}
  \prod_{j=0}^{L-1} f_{i,j}\left(u_j - u_{j+1} + y_j - \Delta_j + \Delta_{j+1} \right)
  \,\rd u_0 \cdots \rd u_{L-1} \\
  &\,=\,
  \int_0^1 \cdots \int_0^1
  \prod_{j=0}^{L-1} f_{i,j}\left(u_j - u_{j+1} + y_j - \Delta_j + \Delta_{j+1}\right)
  \,\rd u_0 \cdots \rd u_{L-1},
\end{align*}
which follows from the periodicity of $f_{i,j}$. Now with the choice
$\Delta_0=0$, we choose $\Delta_2,\ldots,\Delta_{L-1}$ such that
\[
  y_{L-1} - \Delta_{L-1} = 0, \quad
  y_{L-2} - \Delta_{L-2} + \Delta_{L-1} = 0, \;\ldots,\quad
  y_2 - \Delta_2 + \Delta_3 = 0, \quad
  y_1 - \Delta_1 + \Delta_2 = 0.
\]
Adding these expressions together gives $\Delta_1 = \sum_{j=1}^{L-1} y_j$.
This choice of $\bsDelta$ yields
\begin{align*}
  g_i(\bsy)
  &\,=\,
  \int_0^1 \cdots \int_0^1
  f_{i,0}\Big(u_0 - u_1 + \textstyle\sum_{j=0}^{L-1} y_j\Big)
  \prod_{j=1}^{L-1} f_{i,j}\left(u_j - u_{j+1}\right)
  \,\rd u_0 \cdots \rd u_{L-1} \\
  &\,=\, g_i\Big(\textstyle\sum_{j=0}^{L-1} y_j,0,\ldots,0\Big)
  \,=\, g_i\Big(\textstyle\sum_{j=0}^{L-1} y_j,\bszero\Big).
\end{align*}
This completes the proof.
\end{proof}

\begin{lemma} \label{lem2}
If the functions $f_{i,j}$ are periodic, then the outer integral
\eqref{eq:outer} simplifies to
\begin{align} \label{eq:outer2}
   \calI \,=\,
  \int_{D^L} \prod_{i=0}^{L-1} g_i\big(y_{i+1} - y_i,\bszero\big)\,\rd\bsy.
\end{align}
\end{lemma}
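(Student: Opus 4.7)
The plan is to reduce the $L^2$-dimensional outer integral \eqref{eq:outer} to an $L$-dimensional integral by exploiting the fact, given by Lemma~\ref{lem1}, that each $g_i$ depends only on the sum of the components of its argument. Writing $\bsy_i = (y_{i,0},\ldots,y_{i,L-1})$ and applying Lemma~\ref{lem1} inside \eqref{eq:outer} gives
\[
  \calI \,=\, \int_{D^{L^2}} \prod_{i=0}^{L-1} g_i\Big(\textstyle\sum_{j=0}^{L-1}(y_{i+1,j}-y_{i,j}),\bszero\Big) \,\rd\bsy.
\]
Introduce the row sums $S_i := \sum_{j=0}^{L-1} y_{i,j}$, so the integrand depends only on the differences $S_{i+1}-S_i$.

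Next I would, for each $i$ separately, perform the linear change of variables $(y_{i,0},y_{i,1},\ldots,y_{i,L-1}) \mapsto (S_i, y_{i,1},\ldots,y_{i,L-1})$, with $y_{i,0} = S_i - \sum_{j=1}^{L-1} y_{i,j}$. This change has unit Jacobian, and as $y_{i,0}$ ranges over $[0,1]$ the variable $S_i$ ranges over an interval of length one depending on the remaining $y_{i,j}$'s. The crucial observation is that $g_i(\,\cdot\,,\bszero)$ is periodic in its first argument: indeed, shifting the first slot by a period corresponds, in the definition \eqref{eq:inner}, to shifting $x_0$ by a period, under which the integrand is invariant by the assumed periodicity of the $f_{i,j}$. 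Hence the integration in $S_i$ can be translated back to $[0,1]$, independently of the values of $y_{i,1},\ldots,y_{i,L-1}$.

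After this translation the integrand is completely independent of $y_{i,1},\ldots,y_{i,L-1}$ for every $i$, so those $L(L-1)$ variables integrate to $1$ over $[0,1]^{L(L-1)}$, leaving
\[
  \calI \,=\, \int_{[0,1]^L} \prod_{i=0}^{L-1} g_i\big(S_{i+1}-S_i,\bszero\big) \,\rd S_0\cdots\rd S_{L-1},
\]
which is \eqref{eq:outer2} after relabeling $S_i \to y_i$. The main delicate point is the periodicity/translation step: one must justify that the integration over each $S_i$ can be shifted back to $[0,1]$ uniformly in the other variables, and this is exactly where the periodicity of $f_{i,j}$ (inherited by $g_i$) is used. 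Everything else is a bookkeeping exercise with the parametric periodicity $y_i \equiv y_{i \bmod L}$.
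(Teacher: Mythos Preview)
Your proposal is correct and follows essentially the same route as the paper: apply Lemma~\ref{lem1} so that the integrand depends only on the row sums $S_i=\sum_j y_{i,j}$, change variables $y_{i,0}\mapsto S_i$ with unit Jacobian, and integrate out the remaining $y_{i,j}$. In fact you are more careful than the paper on the one delicate point: the paper silently writes the $u_i$-integrals over $D$ after the substitution, whereas you explicitly note that the range of each $S_i$ is a shifted unit interval and that the periodicity of $g_i(\,\cdot\,,\bszero)$ (inherited from the $f_{i,j}$) is what allows the shift back to $D$.
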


\begin{proof}
Substituting \eqref{eq:inner2} into \eqref{eq:outer} we have
\begin{align*}
  \calI
   &\,=\,
  \int_{D^L} \cdots \int_{D^L}\prod_{i=0}^{L-1}
  g_i\Big(\textstyle\sum_{j=0}^{L-1} (y_{i+1,j}-y_{i,j}),\bszero\Big)
  \,\rd\bsy_0 \cdots \rd\bsy_{L-1}.
\end{align*}
We carry out a change of variables from $y_{i,0}$ to $u_i$ for each
$i=0,\ldots,L-1$ by the substitution $u_i = y_{i,0} + \sum_{j=1}^{L-1}
y_{i,j} = \sum_{j=0}^{L-1} y_{i,j}$ with Jacobian $\rd u_i = \rd y_{i,0}$,
to obtain
\begin{align*}
  \calI &\,=\,
  \int_{D^L} \cdots \int_{D^L}
  \prod_{i=0}^{L-1}
  g_i\big(u_{i+1}-u_i,\bszero\big)
  \,(\rd u_0\,\rd y_{0,1}\cdots\rd y_{0,L-1}) \cdots (\rd u_{L-1}\,\rd y_{L-1,1}\cdots\rd y_{0,L-1}) \\
  &\,=\,
  \int_D \cdots \int_D
  \prod_{i=0}^{L-1}
  g_i\big(u_{i+1}-u_i,\bszero\big)
  \,\rd u_0 \cdots \rd u_{L-1},
\end{align*}
where the remaining variables $y_{i,j}$ drop out conveniently and give us
the lower-dimensional integral in \eqref{eq:outer2}.
\end{proof}

The outer integral \eqref{eq:outer2} is now of the form \eqref{eq:int2}.
So we can apply just a rectangle rule and there is no need for a lattice
rule. The cost using the same number of points for both the inner and
outer integrals is now
\[
   n\,\log(n) + n^2\,\log(n)\,,
\]
which is again independent of $L$. The error is $\calO( n^{-\alpha})$,
where $\alpha$ depends on the smoothness of the functions.

In the context of gauge theories, Lemmas~\ref{lem1} and~\ref{lem2} can
be interpreted as a type of ``gauge fixing''. For the 2D compact $\U(1)$
lattice gauge theory model \eqref{eq:QED2D} we have $D = [0,1]$ and
\[
  f_{i,j}(x) = f(x) = \exp(\beta\cos(2\pi x))
  \qquad\mbox{for all } i,j=0,\ldots, L-1,
\]
and for the numerator we will replace $f_{0,0}$ by
\[
  f_{0,0}(x) = \cos(2\pi x)\, \exp(\beta\cos(2\pi x)).
\]
To evaluate the outer integral \eqref{eq:outer2} we need the $n\times n$
matrix $M_i$ with entries
\[
  (M_i)_{k,k'}
  \,=\, g_i\Big(\frac{(k'-k) \bmod n}{n},\bszero\Big),
  \qquad k,k'=0,\ldots,n-1.
\]
This is a circulant matrix because of the periodicity that $g_i$ inherited
from $f_{i,j}$. For the inner integrals \eqref{eq:inner2} we need to
evaluate
\begin{align*}
  g\Big(\frac{k}{n},\bszero\Big)
  \,=\, \int_{[0,1]^L}
  f\Big(x_0 - x_1 + \frac{k}{n} \Big)
  \prod_{j=1}^{L-1} f\left(x_j - x_{j+1}\right)
  \,\rd \bsx,
  \qquad k=0,\ldots,n-1.
\end{align*}
If $f_{0,0}$ is different then we also need
\begin{align*}
  g_0\Big(\frac{k}{n},\bszero\Big)
  \,=\, \int_{[0,1]^L}
  f_{0,0}\Big(x_0 - x_1 + \frac{k}{n} \Big)
  \prod_{j=1}^{L-1} f\left(x_j - x_{j+1}\right)
  \,\rd \bsx,
  \qquad k=0,\ldots,n-1.
\end{align*}
We can approximate these $2n$ values altogether using a rectangle rule
with $n$ points. All matrices will be circulant so we are in Scenario
(A7): indeed we have the situation of $B = M_0\, M^{L-1}$, where the
$n\times n$ matrix $M_0$ changes depending on the value of $k/n$, while
$M$ stays the same. This means $2n+1$ calls to FFT. The combined cost for
computing the inner integrals is then $\calO(n^2\log(n))$. These values
should be pre-computed and stored. For the outer integral we are again in
Scenario~(A7) so this can be computed with cost $\calO(n\log(n))$. The
overall cost is then of the order $n\log(n) + n^2\log(n)$ as claimed.

Executable Julia code for the 2D compact $\U(1)$ lattice gauge theory model
is given below.

\begin{small}
\ifdefined\journalstyle %%%%%%%%%%%%%%%% WATCH OUT two versions here just for avoiding the line break with the where {T <: ...}
\begin{verbatim}
f(beta, x)  = exp(beta * cospi(2*x))
f0(beta, x) = cospi(2*x) * f(beta, x)

function calc_U1_2d_a(beta::T, L::Int=10, n::Int=2^5, N::Int=n) where {T <: AbstractFloat}
    ## inner integral
    t = reshape((T(0):n-1)/n, n, 1)                  # n-by-1: column vector of type T
    k = reshape(T(0):N-1,     1, N)                  # 1-by-N: row vector of type T
    c = sum( f.(beta, t)/n )                         # scaling
    # g
    F = fft( f.(beta, t)/n/c        , 1 )   |> real  # n-by-1 eigenvalues of circulant
    K = fft( f.(beta, t .+ k/N)/n/c , 1 )   |> real  # n-by-N eigenvalues of circulant
    g = sum( K .* F.^(L-1) , dims=1 )                # 1-by-N (value for every k-value)
    # g0
    K0 = fft( f0.(beta, t .+ k/N)/n/c , 1 ) |> real  # n-by-N eigenvalues of circulant
    g0 = sum( K0 .* F.^(L-1) , dims=1 )              # 1-by-N (value for every k-value)
    ## outer integral
    G  = fft( g/N  , 2 ) |> real
    G0 = fft( g0/N , 2 ) |> real
    Qnum = sum( G0 .* G.^(L-1) )
    Qden = sum( G.^L )
    ## result
    ratio = Qnum / Qden # the scaling in both numerator and denominator cancel
end
\end{verbatim}
\else
\begin{verbatim}
f(beta, x)  = exp(beta * cospi(2*x))
f0(beta, x) = cospi(2*x) * f(beta, x)

function calc_U1_2d_a(beta::T, L::Int=10, n::Int=2^5, N::Int=n)
                                                            where {T <: AbstractFloat}
    ## inner integral
    t = reshape((T(0):n-1)/n, n, 1)                  # n-by-1: column vector of type T
    k = reshape(T(0):N-1,     1, N)                  # 1-by-N: row vector of type T
    c = sum( f.(beta, t)/n )                         # scaling
    # g
    F = fft( f.(beta, t)/n/c        , 1 )   |> real  # n-by-1 eigenvalues of circulant
    K = fft( f.(beta, t .+ k/N)/n/c , 1 )   |> real  # n-by-N eigenvalues of circulant
    g = sum( K .* F.^(L-1) , dims=1 )                # 1-by-N (value for every k-value)
    # g0
    K0 = fft( f0.(beta, t .+ k/N)/n/c , 1 ) |> real  # n-by-N eigenvalues of circulant
    g0 = sum( K0 .* F.^(L-1) , dims=1 )              # 1-by-N (value for every k-value)
    ## outer integral
    G  = fft( g/N  , 2 ) |> real
    G0 = fft( g0/N , 2 ) |> real
    Qnum = sum( G0 .* G.^(L-1) )
    Qden = sum( G.^L )
    ## result
    ratio = Qnum / Qden # the scaling in both numerator and denominator cancel
end
\end{verbatim}
\fi
\end{small}

It turns out that we can use an alternative approach based on Fourier
series to simplify the expression so that there is actually no need for
nested integral calculations.

\begin{theorem} \label{thm}
Suppose that the functions $f_{i,j}$ are periodic and have absolutely
convergent Fourier series. Define $\mu_{i+jL} := f_{i,j}$ for
$i,j=0,\ldots,L-1$. Then the integral \eqref{eq:outer}, with inner integral \eqref{eq:inner},
simplifies to
\begin{align} \label{eq:outer3}
   \calI \,=\,
  \int_{D^{L^2}} \prod_{k=0}^{L^2-1} \mu_\ell\big(x_{k+1}-x_\ell\big)\,\rd\bsx,
\end{align}
where now the parametric periodicity is to be taken modulo $L^2$, i.e.,
$x_k \equiv x_{k\bmod L^2}$.
\end{theorem}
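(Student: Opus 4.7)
The plan is to expand every factor in a Fourier series and exploit the orthogonality relation $\int_0^1 e^{2\pi\ri c x}\,\rd x = \delta_{c,0}$. Each $f_{i,j}$ is periodic with absolutely convergent Fourier series $f_{i,j}(x) = \sum_{h\in\bbZ}\hat f_{i,j}(h)\,e^{2\pi\ri h x}$, and the same holds, by inheritance, for $g_i(\cdot,\bszero)$ and for each $\mu_\ell$. My plan is to reduce both the left-hand side \eqref{eq:outer} and the right-hand side \eqref{eq:outer3} to the same explicit sum of products of Fourier coefficients.

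First I would compute the Fourier coefficients of $g_i(\cdot,\bszero)$ by substituting the Fourier expansions of the $f_{i,j}$ into \eqref{eq:inner} with argument $(u,0,\ldots,0)$ (justified by Lemma~\ref{lem1}), swapping sums and integrals by absolute convergence, and integrating each $e^{2\pi\ri c x_j}$ over $[0,1]$. In the telescoping structure of \eqref{eq:inner}, the coefficient of $x_j$ in the exponent is $h_j-h_{j-1}$, so orthogonality forces all $L$ indices to coincide, yielding
\[
  g_i(u,\bszero) \,=\, \sum_{h\in\bbZ}\Big(\prod_{j=0}^{L-1}\hat f_{i,j}(h)\Big)\,e^{2\pi\ri h u}.
\]

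Next I would substitute this expansion into the reduced form \eqref{eq:outer2} of $\calI$ provided by Lemma~\ref{lem2} and repeat the argument. The outer exponent $\sum_i h_i(y_{i+1}-y_i)$ has coefficient $h_{i-1}-h_i$ on $y_i$, so orthogonality again forces a common $h$, collapsing the sum to
\[
  \calI \,=\, \sum_{h\in\bbZ}\,\prod_{i=0}^{L-1}\prod_{j=0}^{L-1}\hat f_{i,j}(h).
\]
Then, applying the same Fourier-and-orthogonality argument directly to the proposed expression \eqref{eq:outer3} in $L^2$ variables $x_0,\ldots,x_{L^2-1}$ produces $\sum_{h\in\bbZ}\prod_{\ell=0}^{L^2-1}\hat\mu_\ell(h)$, which under the relabeling $\mu_{i+jL}=f_{i,j}$ is exactly the double product above. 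Equating the two closed forms finishes the proof.

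The main technical care needed is purely bookkeeping: keeping the indexing consistent as the integral is recast from the doubly-indexed family $f_{i,j}$ on $2L^2$ variables, to the intermediate $g_i$-form on $L$ variables, to the linearly-indexed $\mu_\ell$ on $L^2$ variables, and verifying that in each case the exponent has the telescoping ``$h_{\mathrm{next}}-h_{\mathrm{prev}}$'' structure on a cyclic index set, so that the parametric periodicity forces a single common frequency $h$. The exchanges of summation and integration are routine consequences of absolute convergence and the compactness of the integration domains; no further analytic subtleties arise.
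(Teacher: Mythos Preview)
Your proposal is correct and follows essentially the same route as the paper: both reduce $\calI$ to the single-frequency sum $\sum_{h\in\bbZ}\prod_{i,j}\widehat{f_{i,j}}(h)$ via Fourier expansion and orthogonality, then recognize this sum as the $L^2$-variable integral \eqref{eq:outer3} after the relabeling $\mu_{i+jL}=f_{i,j}$. The only organizational difference is that the paper packages the two Fourier computations into Lemmas~\ref{lem:conv1D} and~\ref{lem:conv2D} of the Appendix (applied to the original $2L^2$-variable form), whereas you first invoke Lemmas~\ref{lem1}--\ref{lem2} and then carry out the telescoping Fourier argument inline on the reduced one-dimensional forms; the mathematical content is identical.
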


\begin{proof}
From Lemma~\ref{lem:conv2D} in \RefApp{app:fourier} we know that the
integral for the 2D problem can be written in terms of the Fourier
coefficients of $f_{i,j}$ as
\begin{align*}
  \calI \,=\, \sum_{\ell\in\bbZ} \prod_{i=0}^{L-1} \prod_{j=0}^{L-1} \widehat{f_{i,j}}(\ell).
\end{align*}
With the relabeling of the functions $\mu_{i+jL} := f_{i,j}$, we can
rewrite the above sum as
\begin{align*}
  \calI \,=\, \sum_{\ell\in\bbZ} \prod_{k=0}^{L^2-1} \widehat{\mu_k}(\ell).
\end{align*}
Comparing with the 1D problem in Lemma~\ref{lem:conv1D} in
\RefApp{app:fourier}, we conclude that this sum can be rewritten as
an integral over $D^{L^2}$ as shown in \eqref{eq:outer3}, with parametric
periodicity modulo $L^2$.
\end{proof}

Theorem~\ref{thm} means that we do not have nested integrals any more, but
instead we have a new integral with dimensionality $L^2$. We have just one
integral of the form \eqref{eq:int2}, with $L$ replaced by $L^2$, so we
are again in Scenario~(A7). The cost is only of order
\[
  n\,\log(n),
\]
and the error is $\calO(n^{-\alpha})$.

Hence we can use the 1D rotor code to calculate the 2D compact $\U(1)$
lattice gauge theory simply by
\begin{small}
\begin{verbatim}
calc_U1_2d_b(beta, L=10, n=2^5) = calc_U1_1d(beta, L^2, n)
\end{verbatim}
\end{small}

We illustrate the code by a small numerical experiment which calculates
some values for the 2D compact $\U(1)$ lattice gauge theory:
% see code/U1.jl and code/experiment_paper.jl
\begin{small}
\begin{verbatim}
# calculate for each beta, L and n in the following three lists by using broadcasting
step = BigFloat("0.1")  # use of arbitrary precision type BigFloat (optional)
# step = 0.1            # alternative: if wanting IEEE double just uncomment this line
betas = 0:step:10                            # size 101 (for the given step)
Ls = [2, 20, 200]'                           # size 1-by-3
ns = reshape([2^4, 2^6, 2^8, 2^10], 1, 1, 4) # size 1-by-1-by-4

# do all calculations:
X2 = calc_U1_2d_b.(betas, Ls, ns)          # 101-by-3-by-4 result array by broadcasting

# print the values for L = 200 and n = 2^10 for some values of beta:
for i=2:10:length(betas)
    println(Float64(betas[i]), " ", X2[i,end,end])
end
\end{verbatim}
\end{small}
which prints to approximately 79~decimal digits the following values
\begin{small}
\begin{verbatim}
0.1 0.04993760398793891942505492702790735280024819495932643969025083229259197970124841
1.1 0.4807027720204957075397353534961410739293237985698753220914923708183899597383392
2.1 0.7135313929252366606474906234333206952579818112136755308698717366991034508513433
3.1 0.8171145492914306407729604696551455026259470380147213328440139033655041292524231
4.1 0.8672601961768063107300630399509515633441383106454204305046785090286863340013323
5.1 0.895651587990760146226062237096125294781978743841561112098097135505392751620628
6.1 0.9138858516725660997721369731593268002734144795505884713207423774281830098389897
7.1 0.9266326601661551618966214804622090172117923146860230737466143933956830061458322
8.1 0.9360676059396539968069515062581218179367309351114611124563259272640695892107535
9.1 0.9433416321068225957542493497236464198235930555179598879914504004185617655236025
\end{verbatim}
\end{small}

\begin{figure}
  \includegraphics[width=0.5\textwidth]{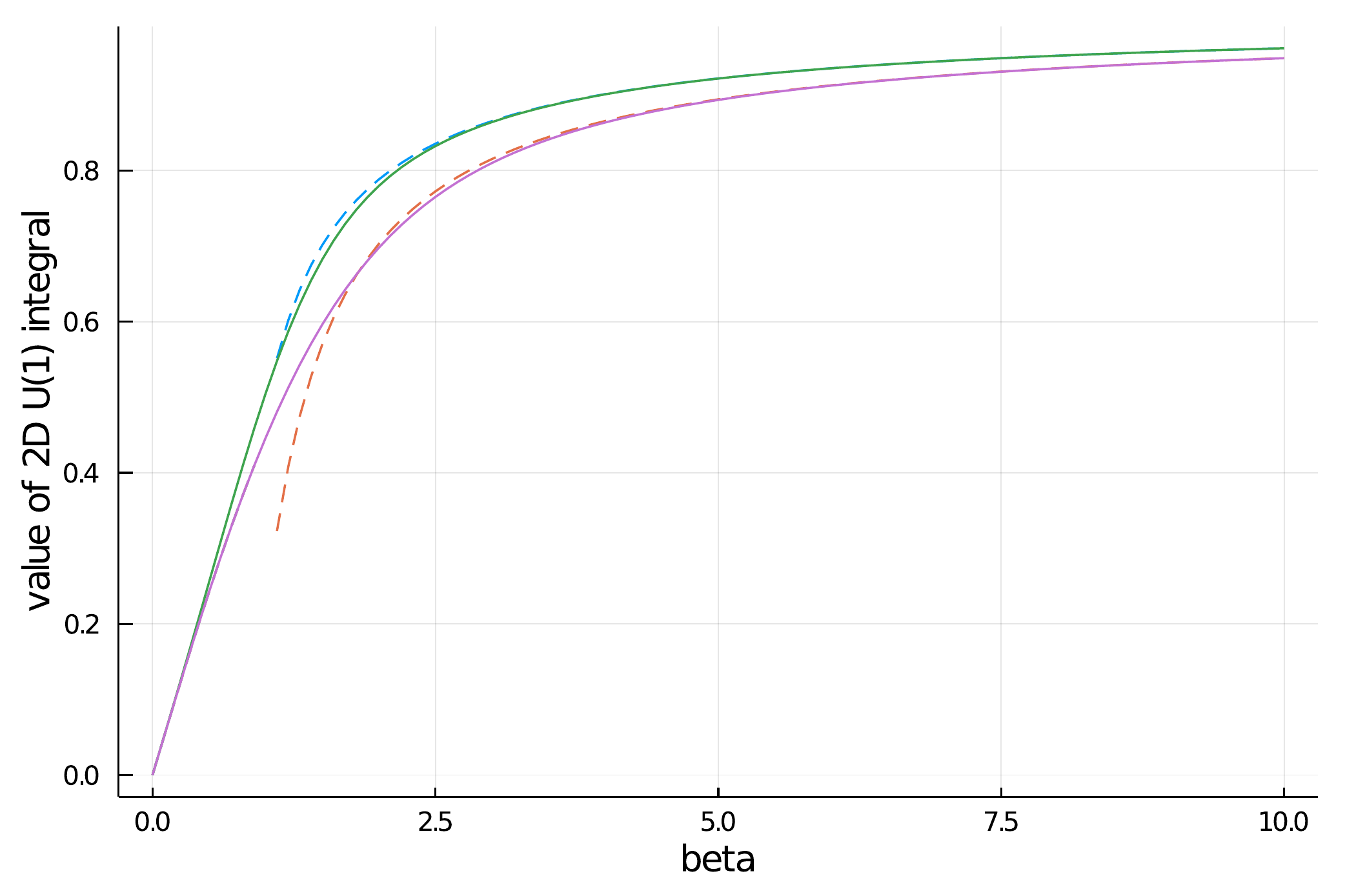}
  \includegraphics[width=0.5\textwidth]{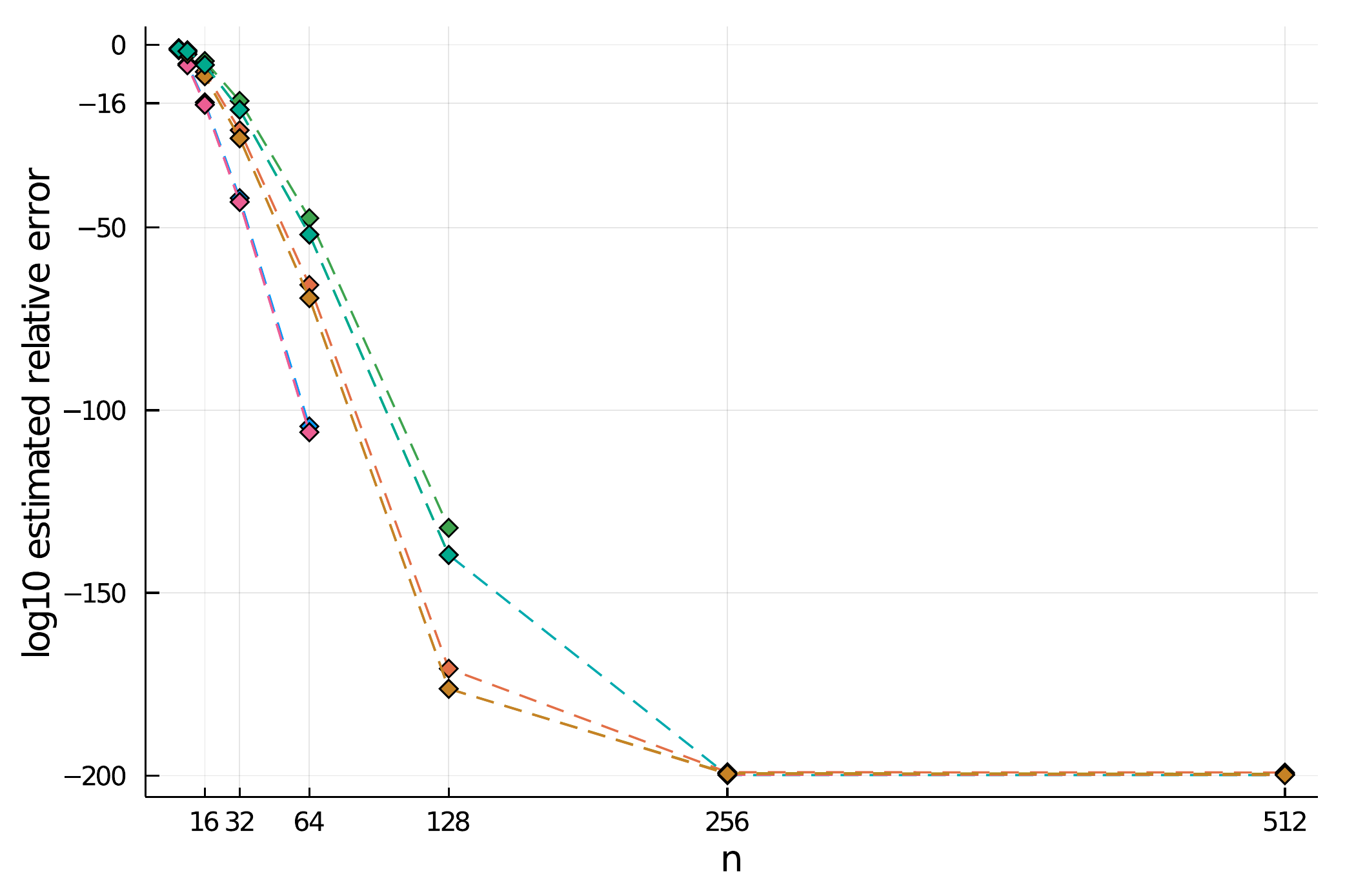}
  \caption{
  Illustration of numerical results for the 2D compact $U(1)$ lattice gauge theory.
  Left: Comparison of our FFT based algorithm (solid lines) with two known asymptotical formulae for $\beta \ll 1$ and $\beta \gg 1$ (dashed lines); the top curve is for $L=2$ and the bottom curve is for $L=200$.
  Right: Estimated accuracy showing exponential convergence for increasing $n$ compared to our final calculated value with $n=1024$ for all combinations of $L \in \{2,20,200\}$ and $\beta \in \{1, 4, 8\}$ (see text).}\label{fig:U1:2d}
\end{figure}

In the left hand side of Figure~\ref{fig:U1:2d} we compare our calculated values to a known asymptotic formula for $\beta \ll 1$ from \cite{Balian:1974xw} and $\beta \gg 1$ from~\cite{Horsley:1981gj} for the 2D compact $U(1)$ lattice gauge integral for $L=2$ and $L=200$.
We can see that for small values of $\beta$ and for large values of $\beta$ the asymptotic formulae are close to our calculated values, while in the neighbourhood of $\beta = 1$ the asymptotic formulae deviate more, as expected.

In the right hand side of Figure~\ref{fig:U1:2d} we illustrate the accuracy of our method in terms of $n$.
To calculate a reference value for different values of $L$ and $\beta$ we have run our code with $n = 1024$ and increased the precision of Julia's \texttt{BigFloat} type to approximately $200$ decimal digits.
We plot the base~$10$ logarithm of the estimated relative error (using the reference value for $n=1024$) in terms of $n$ on the horizontal axis in linear scale.
We plot the error for all combinations of $L \in \{2,20,200\}$ and $\beta \in \{1, 4, 8\}$.
The missing data points are when the error was zero.
Since $200$ decimal digits is our maximum precision the lines will flat line at $-200$.
We observed that the lines for the different values of $L$ are always close together and hence we conclude that the value of $L$ does not really matter for the performance of the algorithm.
For the different choices of $\beta$ we observed that larger values of $\beta$ make the problem slightly harder. E.g., the lines for $\beta = 8$ are the ones on top with the slowest decay, while the lines for $\beta = 1$ are the ones at the bottom with the fastest decay.
This is not a surprise, and obviously, the limiting case of $\beta = 0$ is the trivial problem.
All cases show exponential convergence, this is why we resorted to arbitrary precision to make the graph.
If one is only interested in double precision results, i.e., a relative error of about $10^{-16}$, then we see from the graph that $n = 32$ is sufficient to give $16$ decimal digits for the large $\beta = 8$, while $n = 16$ would suffice for $\beta = 1$.
From the graph we also see that with $n$ somewhere between $128$ and $256$ we can expect to have more than $200$ digits decimal precision for any of the $\beta$'s and $L$'s that we plotted.

Calculating all the results for the $101$ $\beta$ values and $3$ $L$
values from the Julia code snippet above in double precision with $n = 32$
takes less than $10$~milliseconds on a MacBook Pro from 2013 (2.6~Ghz
Dual-Core Intel Core i5).

\subsection{2D higher order couplings}

The denominator in the Wilson loop \eqref{eq:Wilson} can be expressed as
\begin{align*}
  &
  \int_{D^{L^2}} \int_{D^{L^2}} \prod_{i=0}^{L-1}
  \underbrace{\prod_{j=0}^{L-1}
  f_{i,j}\Big( \textstyle\sum_{k=0}^{r_a} \big(x^a_{i+k,j}- x^a_{i+k,j+r_b}\big)
  + \textstyle\sum_{k=0}^{r_b} \big(x^b_{i+r_a,j+k}- x^b_{i,j+k}\big) \Big)
  }_{=:\, g_i\big( (\bsx_i^a, \bsx_{i+1}^a, \ldots,\bsx_{i+r_a}^a);\,\bsx^b \big)}
   \,\rd\bsx^a \,\rd\bsx^b \\
  &\,=\,
  \int_{D^{L^2}} \bigg( \int_{D^{L^2}} \prod_{i=0}^{L-1}
  g_i\Big( (\bsx_i^a, \bsx_{i+1}^a, \ldots,\bsx_{i+r_a}^a); \bsx^b \Big)
   \,\rd\bsx^a \bigg) \rd\bsx^b
   .
\end{align*}
Assuming that $L$ is a multiple of $r_a$, the inner integral has order
$r_a$ coupling and can be turned into an $L/r_a$-fold product of
$r_aL$-dimensional integrals following Section~\ref{sec:higher}. We can
use an $N$-point lattice rule in $r_aL$ dimensions as in Scenario~(B4) at
the cost of order $(L/r_a)\,N^3$ times the number of different samples of
$\bsx^b$. The outer integral over $\bsx^b$ cannot be simplified so it is
of dimensionality $L^2$. This problem is truly high dimensional, except
for the special case $r_a = r_b = 1$ which can be simplified as we show in
Corollary~\ref{cor:Wilson} in \RefApp{app:fourier} using Fourier
series.

\subsection{3D first order couplings}

The 3D compact $\U(1)$ lattice gauge theory problem \eqref{eq:QED3D} is
very tough. We show in Lemma~\ref{lem:conv3D} in
\RefApp{app:fourier} that there is an explicit expression in terms
of Fourier coefficients. Further work is needed to see how this expression
can be used to simplify the integral calculations.

\section{Summary} \label{sec:summary}

In this paper we developed efficient recursive strategies to tackle a
class of high dimensional integrals having a special product structure
with low order couplings, motivated by physics models such as the quantum
rotor and the 2D compact $U(1)$ lattice gauge theory
(Section~\ref{sec:physics}). We reviewed and extended the recursive
strategy from \cite{AGHJLV16,Hartung:2020uuj} for generic integrals with
first order couplings (not necessarily from physics) to identify scenarios
that enable the use of FFT for efficient computation as well as the use of
lattice cubature rules when we have an $L$-fold product of $s$-dimensional
integrals (Section~\ref{sec:first}). Furthermore, we extended the
recursive strategy to higher order couplings, noting that the problems can
become truly high dimensional (Section~\ref{sec:higher}). Then we
considered particular physics applications (Sections~\ref{sec:app-rotor}
and~\ref{sec:app-QED}) and provided Julia codes for the special cases of
quantum rotor and 2D compact $U(1)$ lattice gauge theory. Finally we
provided an alternative formulation of the integrals in terms of Fourier
series to pave the way for future work for tough 2D and 3D physics problems
(Appendix).

\paragraph{Acknowledgements}
We gratefully acknowledge the financial support from
the Australian Research Council under grant DP180101356 and
the Research Foundation Flanders (FWO) under grant G091920N.

\appendix

\section{Alternative approach via Fourier series} \label{app:fourier}

\subsection{1D problems}

\begin{lemma} \label{lem:conv1D}
Let $f_i:D\to\bbR$ be periodic and have an absolutely convergent Fourier
series, and assume parametric periodicity $x_i = x_{i\bmod L}$. Then
\begin{align*}
  \int_{D^L}\prod_{i=0}^{L-1} f_i\big(x_{i+1}-x_i\big) \,\rd\bsx
  \,=\, \sum_{\ell\in\bbZ} \prod_{i=0}^{L-1} \widehat{f_i}(\ell).
\end{align*}
\end{lemma}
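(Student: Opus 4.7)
The plan is to expand each factor $f_i$ in its Fourier series, interchange the resulting sum with the integral (justified by absolute convergence), and then use orthogonality of the exponentials together with the parametric periodicity $x_L \equiv x_0$ to collapse the multi-index sum into a single sum over $\ell \in \bbZ$.

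Concretely, I would first write
\[
 f_i(y) \,=\, \sum_{\ell_i \in \bbZ} \widehat{f_i}(\ell_i)\, e^{2\pi \ri \ell_i y},
\]
so that the integrand becomes
\[
 \prod_{i=0}^{L-1} f_i(x_{i+1}-x_i)
 \,=\, \sum_{\bsl \in \bbZ^L}
 \bigg(\prod_{i=0}^{L-1} \widehat{f_i}(\ell_i)\bigg)\,
 \exp\!\bigg(2\pi \ri \sum_{i=0}^{L-1} \ell_i\,(x_{i+1}-x_i)\bigg).
\]
The absolute convergence of each Fourier series, combined with the fact that $D^L$ has finite measure, lets me swap the $L$-fold infinite sum with the integral over $\bsx$ by dominated convergence (or Fubini--Tonelli applied to $\prod_i |\widehat{f_i}(\ell_i)|$).

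The key algebraic step is to rewrite the phase via an Abel-type resummation, using $x_L \equiv x_0$:
\[
 \sum_{i=0}^{L-1} \ell_i\,(x_{i+1}-x_i)
 \,=\, \sum_{i=0}^{L-1} (\ell_{i-1}-\ell_i)\, x_i,
\]
where the indices on $\ell$ are taken modulo $L$. Integrating coordinate-wise over $D = [0,1]$ and invoking orthogonality $\int_0^1 e^{2\pi \ri m x_i}\,\rd x_i = \delta_{m,0}$ then forces $\ell_{i-1} = \ell_i$ for every $i$, i.e., all $\ell_i$ coincide with a common value $\ell \in \bbZ$. The surviving contribution is precisely $\sum_{\ell \in \bbZ} \prod_{i=0}^{L-1} \widehat{f_i}(\ell)$, as claimed.

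The only mildly delicate point is the interchange of the $L$-fold sum and the integral; however, absolute convergence of each Fourier series gives $\sum_{\bsl} \prod_i |\widehat{f_i}(\ell_i)| = \prod_i \big(\sum_{\ell_i} |\widehat{f_i}(\ell_i)|\big) < \infty$, so Fubini applies and the interchange is legitimate. Everything else is a bookkeeping computation that uses only the cyclic boundary condition $x_L \equiv x_0$ to convert the telescoping-style phase into the consecutive-difference form $\ell_{i-1} - \ell_i$.
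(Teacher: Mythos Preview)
Your proof is correct and follows essentially the same route as the paper: Fourier-expand each $f_i$, re-index the phase using $x_L \equiv x_0$ to obtain $\sum_i (\ell_{i-1}-\ell_i)\,x_i$, and invoke orthogonality to force all $\ell_i$ equal. The paper wraps this computation inside an auxiliary function $\calI(\bsy) := \int_{D^L} \prod_i f_i(x_{i+1}-x_i+y_i)\,\rd\bsx$ and recovers the result as $\calI(\bszero) = \sum_{\bsh} \widehat{\calI}(\bsh)$, a framing chosen so the same template can be reused verbatim for the 2D and 3D lemmas, but the underlying argument is identical to yours.
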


\begin{proof}
Define a periodic function
\begin{align*}
  \calI(\bsy)
  &:=\,
  \int_{D^L}\prod_{i=0}^{L-1} f_i\big(\xi_i(\bsx) + y_i \big)\,\rd\bsx,
  \qquad \bsy\in D^L,
\end{align*}
for generic functions $\xi_i$, and consider its Fourier series
\[
  \calI(\bsy) \,=\, \sum_{\bsh\in\bbZ^L} \widehat{\calI}(\bsh)\,e^{2\pi\ri\,\bsh\cdot\bsy},
  \qquad
  \widehat{\calI}(\bsh) \,:=\,
  \int_{D^L} \calI(\bsy)\, e^{-2\pi\ri\,\bsh\cdot\bsy}
  \,\rd\bsy.
\]
The desired integral is recovered by evaluating the Fourier series at
$\bsy = \bszero$: $\calI(\bszero) \,=\, \sum_{\bsh\in\bbZ^L}
\widehat{\calI}(\bsh)$.

We proceed to compute the Fourier coefficients $\widehat{\calI}(\bsh)$.
Due to the product structure, all integrals in $\widehat{\calI}(\bsh)$ are
one-dimensional
\begin{align} \label{eq:fourier}
  &\int_D f_i\big(\xi_i(\bsx) + y_i \big) \, e^{-2\pi\ri\,h_i\,y_i}\,\rd y_i
  \,=\, \int_D \sum_{\ell\in\bbZ} \widehat{f_i}(\ell)\,e^{2\pi\ri\,\ell\,(\xi_i(\bsx) + y_i)}
  \,e^{-2\pi\ri\,h_i\,y_i}\,\rd y_i \nonumber \\
  &\,=\, \sum_{\ell\in\bbZ} \widehat{f_i}(\ell)\,e^{2\pi\ri\,\ell\,\xi_i(\bsx)}
  \int_D e^{2\pi\ri\,(\ell - h_i)\,y_i}\,\rd y_i
  \,=\, \widehat{f_i}(h_i)\,e^{2\pi\ri\,h_i\,\xi_i(\bsx)},
\end{align}
where we used $\int_D e^{2\pi\ri (\ell-h)y}\,\rd y = 1$ if $\ell=h$ and is
$0$ otherwise. Thus
\begin{align*}
  \widehat{\calI}(\bsh)
  &\,=\,
  \int_{D^L} \prod_{i=0}^{L-1} \widehat{f_i}(h_i)\,e^{2\pi\ri\,h_i\,\xi_i(\bsx)}\,\rd\bsx
  \,=\,
  \bigg(\prod_{i=0}^{L-1} \widehat{f_i}(h_i) \bigg)
  \int_{D^L} e^{2\pi\ri\, \sum_{i=0}^{L-1} h_i\,\xi_i(\bsx)}\,\rd\bsx.
\end{align*}

Specializing now to $\xi_i(\bsx) := x_{i+1} - x_i$, we have for the
exponent
\[
 \sum_{i=0}^{L-1} h_i\,\xi_i(\bsx)
 \,=\, \sum_{i=0}^{L-1} h_i\,x_{i+1} - \sum_{i=0}^{L-1} h_i\,x_i
 \,=\, \sum_{i=0}^{L-1} h_{i-1}\,x_i - \sum_{i=0}^{L-1} h_i\,x_i
 \,=\, \sum_{i=0}^{L-1} (h_{i-1} - h_i)\,x_i,
\]
where in the second equality we re-indexed the first sum using the
property that all indices are taken modulo $L$. Thus $\int_{D^L}
e^{2\pi\ri\, \sum_{i=0}^{L-1} h_i\,\xi_i(\bsx)}\,\rd\bsx = 1$ if and only
if
\begin{align} \label{eq:cond1D}
  h_{i-1} = h_i \qquad\mbox{for all $i$ (taken modulo $L$)},
\end{align}
and the integral is zero otherwise. We conclude that all components of
$\bsh$ must be the same for the corresponding Fourier coefficient to be
nonzero. Hence
\[
  \calI(\bsy) \,=\, \sum_{\ell\in\bbZ} \bigg(\prod_{i=0}^{L-1} \widehat{f_i}(\ell)\bigg)
  \,e^{2\pi\ri\,(\ell,\ldots,\ell)\cdot\bsy}.
\]
Our desired integral is recovered by evaluating the Fourier series at
$\bsy = \bszero$.
\end{proof}

The result can be extended to higher order couplings by changing the
definition of the functions $\xi_i$ in the proof.

\begin{corollary}
Let $f_i:D\to\bbR$ be periodic and have an absolutely convergent Fourier
series, and assume parametric periodicity $x_i = x_{i\bmod L}$. Then
\begin{align*}
  \int_{D^L}\prod_{i=0}^{L-1} f_i\big(x_{i+1}-x_{i-1}\big) \,\rd\bsx
  \,=\,
  \begin{cases}
  \displaystyle\sum_{\ell\in\bbZ} \prod_{i=0}^{L-1} \widehat{f_i}(\ell) & \mbox{if $L$ is odd}, \\
  \displaystyle
  \bigg(\sum_{\ell\in\bbZ} \prod_{\satop{i=0}{\rm even}}^{L-1} \widehat{f_i}(\ell)\bigg)
  \bigg(\sum_{\ell\in\bbZ} \prod_{\satop{i=0}{\rm odd}}^{L-1} \widehat{f_i}(\ell)\bigg)
  & \mbox{if $L$ is even}.
  \end{cases}
\end{align*}
\end{corollary}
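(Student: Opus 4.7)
The plan is to reuse the proof of Lemma~\ref{lem:conv1D} essentially verbatim, merely replacing the coupling $\xi_i(\bsx) = x_{i+1}-x_i$ by $\xi_i(\bsx) := x_{i+1}-x_{i-1}$. Every step of that earlier argument up to and including the derivation of the Fourier coefficient
\[
  \widehat{\calI}(\bsh) \,=\, \left(\prod_{i=0}^{L-1}\widehat{f_i}(h_i)\right) \int_{D^L} e^{2\pi\ri\,\sum_{i=0}^{L-1} h_i\,\xi_i(\bsx)}\,\rd\bsx
\]
carries over unchanged, since it uses only the product structure of the integrand, the periodicity of the $f_i$, and the orthogonality $\int_D e^{2\pi\ri(\ell-h)y}\,\rd y$ being $1$ if $\ell=h$ and $0$ otherwise.

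The one new computation is to redo the re-indexing of the exponent with the new $\xi_i$. Using the induced parametric periodicity $h_i \equiv h_{i\bmod L}$ on the dual indices, I would write
\[
  \sum_{i=0}^{L-1} h_i\,(x_{i+1}-x_{i-1})
  \,=\, \sum_{i=0}^{L-1} (h_{i-1}-h_{i+1})\,x_i,
\]
after which the remaining exponential integral equals $1$ precisely when $h_{i-1} = h_{i+1}$ for every $i$ taken modulo $L$, and vanishes otherwise. Equivalently, $\bsh$ must be constant along every orbit of the shift $i \mapsto i+2$ acting on the cyclic group $\bbZ/L\bbZ$.

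The conclusion then follows from a case split on the parity of $L$, which is the only place where the argument really diverges from that of Lemma~\ref{lem:conv1D}. When $L$ is odd, $\gcd(2,L)=1$, so the shift is transitive and $\bsh = (\ell,\ldots,\ell)$ is determined by a single mode $\ell\in\bbZ$; summing $\widehat{\calI}(\bsh)$ over such $\bsh$ reproduces the first branch of the formula. When $L$ is even, the shift by $2$ has exactly two orbits, the even indices and the odd indices, so $\bsh$ is parametrised by two \emph{independent} modes $\ell_e,\ell_o\in\bbZ$; the product $\prod_i\widehat{f_i}(h_i)$ then splits as $\prod_{i\,\text{even}}\widehat{f_i}(\ell_e)\cdot\prod_{i\,\text{odd}}\widehat{f_i}(\ell_o)$, and the resulting double sum factorises into the stated product of two one-parameter sums. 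The only pitfall, and hence the main thing to verify carefully, is this orbit analysis of $i\mapsto i+2$ on $\bbZ/L\bbZ$; everything else is a mechanical transcription of the earlier proof.
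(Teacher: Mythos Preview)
Your proposal is correct and matches the paper's own proof essentially step for step: the paper likewise redefines $\xi_i(\bsx) := x_{i+1}-x_{i-1}$, obtains the condition $h_{i-1}=h_{i+1}$ for all $i$ modulo $L$, and then splits on the parity of $L$ to conclude. Your orbit language for the shift $i\mapsto i+2$ on $\bbZ/L\bbZ$ is a slightly more explicit way of phrasing the same case analysis.
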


\begin{proof}
We replace the functions $\xi_i$ in the proof of Lemma~\ref{lem:conv1D} by
$\xi_i(\bsx) := x_{i+1} - x_{i-1}$. Then the condition \eqref{eq:cond1D}
becomes
\[
  h_{i-1} = h_{i+1} \qquad\mbox{for all $i$ (taken modulo $L$)}.
\]
We conclude that if $L$ is odd then all components of $\bsh$ must be
equal, and if $L$ is even then there are two possible values for the
components of $\bsh$ depending on whether the index is even or odd. This
leads to the corollary.
\end{proof}

\subsection{2D problems}

The same strategy can be used to tackle 2D problems.

\begin{lemma} \label{lem:conv2D}
Let $f_{i,j}:D\to\bbR$ be periodic and have an absolutely convergent
Fourier series, and assume parametric periodicity modulo $L$. Then
\begin{align*}
  \int_{D^{L^2}} \int_{D^{L^2}} \prod_{i=0}^{L-1} \prod_{j=0}^{L-1}
  f_{i,j}\big(x^a_{i,j} - x^a_{i,j+1} - x^b_{i,j} + x^b_{i+1,j}\big) \,\rd\bsx^a\,\rd\bsx^b
  \,=\, \sum_{\ell\in\bbZ} \prod_{i=0}^{L-1} \prod_{j=0}^{L-1} \widehat{f_{i,j}}(\ell).
\end{align*}
\end{lemma}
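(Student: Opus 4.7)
The plan is to mimic exactly the Fourier-coefficient strategy used in the proof of Lemma~\ref{lem:conv1D}, upgraded to handle the two families of variables $\bsx^a$ and $\bsx^b$ simultaneously. I would first introduce a shifted auxiliary integral
\[
  \calI(\bsy) \,:=\, \int_{D^{L^2}}\int_{D^{L^2}} \prod_{i=0}^{L-1}\prod_{j=0}^{L-1} f_{i,j}\bigl(\xi_{i,j}(\bsx^a,\bsx^b) + y_{i,j}\bigr)\,\rd\bsx^a\,\rd\bsx^b,
\]
where $\xi_{i,j}(\bsx^a,\bsx^b) := x^a_{i,j} - x^a_{i,j+1} - x^b_{i,j} + x^b_{i+1,j}$ and $\bsy = (y_{i,j})_{i,j=0}^{L-1}\in D^{L^2}$. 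The desired integral is $\calI(\bszero)$, which by the absolute convergence of the Fourier series equals the sum of all its Fourier coefficients $\widehat{\calI}(\bsh)$ over multi-indices $\bsh\in\bbZ^{L^2}$.

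Next I would compute $\widehat{\calI}(\bsh)$ exactly as in \eqref{eq:fourier}: the product structure lets one carry out each integration over $y_{i,j}$ separately, producing the factor $\widehat{f_{i,j}}(h_{i,j})\,e^{2\pi\ri\,h_{i,j}\,\xi_{i,j}(\bsx^a,\bsx^b)}$. Hence
\[
  \widehat{\calI}(\bsh) \,=\, \biggl(\prod_{i,j} \widehat{f_{i,j}}(h_{i,j})\biggr)\int_{D^{L^2}}\int_{D^{L^2}} \exp\!\Bigl(2\pi\ri\sum_{i,j} h_{i,j}\,\xi_{i,j}(\bsx^a,\bsx^b)\Bigr)\,\rd\bsx^a\,\rd\bsx^b.
\]

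The main bookkeeping step, and the only real obstacle, is to collect the coefficients of each individual variable $x^a_{p,q}$ and $x^b_{p,q}$ in the exponent, using the parametric periodicity modulo $L$ to reindex. A direct calculation gives
\[
  \sum_{i,j} h_{i,j}\,\xi_{i,j}(\bsx^a,\bsx^b) \,=\, \sum_{p,q}(h_{p,q}-h_{p,q-1})\,x^a_{p,q} \;+\; \sum_{p,q}(h_{p-1,q}-h_{p,q})\,x^b_{p,q}.
\]
The remaining $2L^2$ one-dimensional integrals each evaluate to $1$ if the corresponding coefficient vanishes and to $0$ otherwise. This forces the two conditions $h_{p,q}=h_{p,q-1}$ and $h_{p,q}=h_{p-1,q}$ for all $p,q$ modulo $L$, so that $h_{i,j}$ must be independent of both indices.

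Finally, I would conclude: the nonzero Fourier coefficients are precisely those indexed by $\bsh = (\ell,\ldots,\ell)$ for some $\ell\in\bbZ$, and for such $\bsh$ we have $\widehat{\calI}(\bsh)=\prod_{i,j}\widehat{f_{i,j}}(\ell)$. Summing over $\ell$ yields the claimed identity. The argument is essentially the same as in Lemma~\ref{lem:conv1D}; the only new point is verifying the divergence-type cancellation condition on the discrete $L\times L$ grid, which is why I expect that coefficient-collection step to be the most error-prone part to write out carefully.
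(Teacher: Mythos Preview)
Your proposal is correct and follows essentially the same approach as the paper's proof: introduce the shifted integral $\calI(\bsy)$, compute its Fourier coefficients via the one-dimensional integrals over $y_{i,j}$, reindex the exponent to isolate the coefficients of each $x^a_{p,q}$ and $x^b_{p,q}$, and deduce that the only surviving $\bsh$ are the constant ones. The coefficient-collection identity you wrote matches the paper's (up to a sign convention on the $x^b$ term), and the conclusion is identical.
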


\begin{proof}
As in the 1D problem we define
\begin{align*}
  \calI(\bsy)
  &\,:=\,
  \int_{D^{L^2}} \int_{D^{L^2}} \prod_{i=0}^{L-1} \prod_{j=0}^{L-1}
  f_{i,j}\big(\xi_{i,j}(\bsx^a,\bsx^b) + y_{i,j} \big)
  \,\rd\bsx^a\,\rd\bsx^b,
  \qquad \bsy\in D^{L^2},
\end{align*}
for generic functions $\xi_{i,j}$, and consider the Fourier series
\[
  \calI(\bsy) \,=\, \sum_{\bsh\in\bbZ^{L^2}} \widehat{\calI}(\bsh)\,e^{2\pi\ri\,\bsh\cdot\bsy},
  \qquad
  \widehat{\calI}(\bsh) \,:=\,
  \int_{D^{L^2}} \calI(\bsy)\, e^{-2\pi\ri\,\bsh\cdot\bsy}
  \,\rd\bsy,
\]
where for the dot product we interpret an element in $D^{L^2}$ as a vector
of length $L^2$ rather than as a matrix of $L\times L$. Our desired
integral is recovered by evaluating the Fourier series at $\bsy =
\bszero$.

Analogously to \eqref{eq:fourier},
all integrals in $\widehat{\calI}(\bsh)$ are one-dimensional
\begin{align*}
  &\int_D f_{i,j}\big(\xi_{i,j}(\bsx^a,\bsx^b) + y_{i,j} \big)
  \, e^{-2\pi\ri\,h_{i,j}\,y_{i,j}}\,\rd y_{i,j}
  \,=\, \widehat{f_{i,j}}(h_{i,j})\,
  e^{2\pi\ri\,h_{i,j}\,\xi_{i,j}(\bsx^a,\bsx^b)},
\end{align*}
and thus
\begin{align} \label{eq:hat2D}
  \widehat{\calI}(\bsh)
  &\,=\,
  \bigg(\prod_{i=0}^{L-1} \prod_{j=0}^{L-1} \widehat{f_{i,j}}(h_{i,j}) \bigg)
  \int_{D^{L^2}} \int_{D^{L^2}} e^{2\pi\ri\, \sum_{i=0}^{L-1} \sum_{j=0}^{L-1} h_{i,j}\,\xi_{i,j}(\bsx^a,\bsx^b)}
  \,\rd\bsx^a\,\rd\bsx^b.
\end{align}
Specializing now to $\xi_{i,j}(\bsx^a,\bsx^b) \,:=\, x^a_{i,j} -
x^a_{i,j+1} - x^b_{i,j} + x^b_{i+1,j}$, we have the exponent
\begin{align*}
  \sum_{i=0}^{L-1} \sum_{j=0}^{L-1} h_{i,j}\,\xi_{i,j}(\bsx^a,\bsx^b)
  &\,=\, \sum_{i=0}^{L-1} \sum_{j=0}^{L-1} \Big[ \big(h_{i,j} - h_{i,j-1}\big) \,x^a_{i,j} -
  \big(h_{i,j} - h_{i-1,j}\big)\,x^b_{i,j} \Big],
\end{align*}
where
we re-indexed some terms since all indices should be taken modulo $L$. We
conclude that
\begin{align*}
  &\int_{D^{L^2}} \int_{D^{L^2}} e^{2\pi\ri\, \sum_{i=0}^{L-1} \sum_{j=0}^{L-1} h_{i,j}\,\xi_{i,j}(\bsx^a,\bsx^b)}
  \,\rd\bsx^a\,\rd\bsx^b \\
  &\,=\,
  \prod_{i=0}^{L-1} \prod_{j=0}^{L-1} \bigg[
  \bigg(\int_D e^{2\pi\ri (h_{i,j} - h_{i,j-1})\,x^a_{i,j}}\,\rd x^a_{i,j}\bigg)
  \bigg(\int_D e^{-2\pi\ri (h_{i,j} - h_{i-1,j}) \,x^b_{i,j}}\,\rd x^b_{i,j} \bigg)
  \bigg],
\end{align*}
which is equal to $1$ if and only if
\begin{align*}
  h_{i,j} = h_{i,j-1} = h_{i-1,j}
  \qquad\mbox{for all $i,j$ (taken modulo $L$)},
\end{align*}
and the integral is equal to $0$ otherwise. This means that all components
of $\bsh$ are equal, and we have reduced $L^2$ parameters down to $1$.
This yields the desired formula.
\end{proof}

The result extends trivially to the Wilson loop with $r_a = r_b = 1$.

\begin{corollary} \label{cor:Wilson}
Let $f_{i,j}:D\to\bbR$ be periodic and have an absolutely convergent
Fourier series, and assume parametric periodicity modulo $L$. The Wilson
loop with parameters $r_a$ and $r_b$ requires
\begin{align*}
  \calI^{r_a,r_b}
  &\,:=\, \int_{D^{L^2}} \int_{D^{L^2}} \prod_{i=0}^{L-1} \prod_{j=0}^{L-1}
  f_{i,j}\big(\xi_{i,j}(\bsx^a,\bsx^b)\big)
  \,\rd\bsx^a\,\rd\bsx^b, \\
  \xi_{i,j}(\bsx^a,\bsx^b) &\,:=\,
  x^a_{i,j} + x^a_{i+1,j} + \cdots + x^a_{i+r_a,j}
  + x^b_{i+r_a,j} + x^b_{i+r_a,j+1} + \cdots + x^b_{i+r_a,j+r_b} \\
  &\qquad - x^a_{i+r_a,j+r_b} - x^a_{i+r_a-1,j+r_b} - \cdots - x^a_{i,j+r_b}
  - x^b_{i,j+r_b} - x^b_{i,j+r_b-1} - \cdots - x^b_{i,j}.
\end{align*}
We have
\[
 \calI^{1,1}
 \,=\, \begin{cases}
 \displaystyle
 \sum_{\ell\in\bbZ}
  \prod_{i=0}^{L-1} \prod_{j=0}^{L-1} \widehat{f_{i,j}}(\ell)
  & \mbox{if $L$ is odd}, \\
  \displaystyle
  \bigg(\sum_{\ell\in\bbZ}
  \underbrace{\textstyle\prod_{i=0}^{L-1} \prod_{j=0}^{L-1}}_{i+j~\mathrm{odd}} \widehat{f_{i,j}}(\ell) \bigg)
  \bigg(\sum_{\ell\in\bbZ}
  \underbrace{\textstyle\prod_{i=0}^{L-1} \prod_{j=0}^{L-1}}_{i+j~\mathrm{even}} \widehat{f_{i,j}}(\ell) \bigg)
  & \mbox{if $L$ is even}.
  \end{cases}
\]
\end{corollary}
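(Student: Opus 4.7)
I would follow the same Fourier-coefficient scheme used in the proof of Lemma~\ref{lem:conv2D}, with the plaquette phase replaced by the $r_a=r_b=1$ Wilson phase. Define the shifted auxiliary function
\[
  \calI^{1,1}(\bsy) \,:=\, \int_{D^{L^2}}\int_{D^{L^2}} \prod_{i=0}^{L-1}\prod_{j=0}^{L-1} f_{i,j}\big(\xi_{i,j}(\bsx^a,\bsx^b)+y_{i,j}\big)\,\rd\bsx^a\,\rd\bsx^b,
\]
expand in a Fourier series in $\bsy\in D^{L^2}$, and apply the one-dimensional identity~\eqref{eq:fourier} to each $y_{i,j}$ integral exactly as in~\eqref{eq:hat2D} to obtain
\[
  \widehat{\calI^{1,1}}(\bsh)
  \,=\, \bigg(\prod_{i,j} \widehat{f_{i,j}}(h_{i,j})\bigg)
  \int_{D^{L^2}}\int_{D^{L^2}}
  e^{2\pi\ri\sum_{i,j} h_{i,j}\,\xi_{i,j}(\bsx^a,\bsx^b)}\,\rd\bsx^a\,\rd\bsx^b.
\]
The target integral is then $\calI^{1,1}(\bszero) = \sum_{\bsh\in\bbZ^{L^2}} \widehat{\calI^{1,1}}(\bsh)$, and everything reduces to identifying those $\bsh$ for which the remaining exponential integral equals $1$ rather than $0$.

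For $r_a=r_b=1$ the phase expands as $\xi_{i,j} = (x^a_{i,j}+x^a_{i+1,j}-x^a_{i+1,j+1}-x^a_{i,j+1}) + (x^b_{i+1,j}+x^b_{i+1,j+1}-x^b_{i,j+1}-x^b_{i,j})$. Re-indexing each of the eight terms modulo~$L$, I would collect the integer coefficient of $x^a_{p,q}$ in the exponent as $h_{p,q}+h_{p-1,q}-h_{p-1,q-1}-h_{p,q-1}$ and the coefficient of $x^b_{p,q}$ as $-h_{p,q}+h_{p-1,q}+h_{p-1,q-1}-h_{p,q-1}$. Both must vanish for every $(p,q)$; adding and subtracting these two relations yields
\[
  h_{p,q}=h_{p-1,q-1} \qquad\text{and}\qquad h_{p-1,q}=h_{p,q-1},
\]
so $\bsh$ is constant along every diagonal $\{i-j\equiv c\bmod L\}$ and along every antidiagonal $\{i+j\equiv c\bmod L\}$.

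The last step is to translate these two constancies into the case split by an orbit analysis on the torus $(\bbZ/L\bbZ)^2$. Two sites $(p,q)$ and $(p',q')$ lie in the same orbit of the group generated by the diagonal shift $(1,1)$ and the antidiagonal shift $(1,-1)$ iff the system $a+b\equiv p'-p$, $a-b\equiv q'-q\pmod L$ is solvable, i.e.\ iff both $p'-p+q'-q$ and $p'-p-q'+q$ lie in $2(\bbZ/L\bbZ)$. When $L$ is odd, the doubling map is invertible, the action is transitive, so $\bsh$ reduces to a single constant $(\ell,\ldots,\ell)$ with $\ell\in\bbZ$; summing over $\ell$ gives the first branch of the formula. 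When $L$ is even, the solvability condition collapses to $p+q\equiv p'+q'\pmod 2$, so the orbits are precisely the two checkerboard classes $\{i+j~\mathrm{even}\}$ and $\{i+j~\mathrm{odd}\}$; then $\bsh$ takes two independent integer values $\ell_{\mathrm{even}},\ell_{\mathrm{odd}}$, and the factored product over $(i,j)$ splits into the two advertised partial products. The only spot where a careful check is needed is the index bookkeeping when reading off the coefficients of $x^a_{p,q}$ and $x^b_{p,q}$ under periodic boundary conditions; once that is correct, the case distinction is a straightforward exercise in linear arithmetic over $\bbZ/L\bbZ$.
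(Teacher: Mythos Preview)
Your proposal is correct and follows essentially the same route as the paper: the auxiliary $\calI^{1,1}(\bsy)$, the reduction via~\eqref{eq:fourier} to~\eqref{eq:hat2D}, the vanishing conditions on the coefficients of $x^a_{p,q}$ and $x^b_{p,q}$, and the add/subtract trick yielding $h_{p,q}=h_{p-1,q-1}$ and $h_{p-1,q}=h_{p,q-1}$ all match the paper's argument. The only cosmetic differences are that the paper first writes down the general-$r_a,r_b$ condition~\eqref{eq:wilson} before specializing, and that you spell out the checkerboard case split as an explicit orbit computation on $(\bbZ/L\bbZ)^2$ where the paper simply asserts it.
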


\begin{proof}
Following the proof of Lemma~\ref{lem:conv2D}, the exponent
$\sum_{i=0}^{L-1} \sum_{j=0}^{L-1} h_{i,j}\,\xi_{i,j}(\bsx^a,\bsx^b)$ is
now
\begin{align*}
  &
  \sum_{i=0}^{L-1} \sum_{j=0}^{L-1} \bigg[
  \sum_{k=0}^{r_a} \big( h_{i,j}\,x_{i+k,j}^a - h_{i,j}\,x_{i+k,j+r_b}^a \big)
  + \sum_{k=0}^{r_b} \big( h_{i,j}\,x_{i+r_a,j+k}^b - h_{i,j}\,x_{i,j+k}^b \big) \bigg] \\
  &\,=\, \sum_{i=0}^{L-1} \sum_{j=0}^{L-1} \bigg[
  \sum_{k=0}^{r_a} \big( h_{i-k,j} - h_{i-k,j-r_b} \big)\,x_{i,j}^a
  + \sum_{k=0}^{r_b} \big( h_{i-r_a,j-k} - h_{i,j-k}\big)\,x_{i,j+k}^b  \bigg],
\end{align*}
where we again re-indexed some terms. We conclude that the integral in
\eqref{eq:hat2D}
is equal to $1$ if and only if for all $i,j$ (taken modulo $L$)
\begin{align} \label{eq:wilson}
  \sum_{k=0}^{r_a} h_{i-k,j} \,=\, \sum_{k=0}^{r_a}  h_{i-k,j-r_b}
  \qquad\mbox{and}\qquad
  \sum_{k=0}^{r_b} h_{i,j-k} \,=\, \sum_{k=0}^{r_b} h_{i-r_a,j-k},
\end{align}
and is $0$ otherwise.
For the special case $r_a = r_b = 1$, the conditions in \eqref{eq:wilson}
are
\[
 \begin{cases}
  h_{i,j} + h_{i-1,j} \,=\, h_{i,j-1} + h_{i-1,j-1}, \\
  h_{i,j} + h_{i,j-1} \,=\, h_{i-1,j} + h_{i-1,j-1} .
  \end{cases}
\]
Adding and subtracting these two expressions lead to, respectively,
\[
  h_{i,j} = h_{i-1,j-1}
  \qquad\mbox{and}\qquad
  h_{i-1,j} = h_{i,j-1}.
\]
If $L$ is a multiple of $2$, then we conclude that the components of
$\bsh$ can take only two possible values depending on the value of
$(i+j)\bmod 2$, following a chessboard pattern. On the other hand, if $L$
is not a multiple of $2$ then all components of $\bsh$ have the same
value. These lead to the formulas in the corollary.
\end{proof}

\subsection{3D problems}

\begin{lemma} \label{lem:conv3D}
Let $f_{i,j,k}:D\to\bbR$ be periodic and have an absolutely convergent
Fourier series, and assume parametric periodicity modulo $L$. Then
\begin{align*}
  &\int_{D^{3L^3}} \prod_{i=0}^{L-1} \prod_{j=0}^{L-1} \prod_{k=0}^{L-1} \Big[
  f_{i,j,k}\big(x^a_{i,j,k} - x^a_{i,j+1,k} - x^b_{i,j,k} + x^b_{i+1,j,k} \big) \\
  &\qquad\qquad\qquad\qquad
  \cdot
  f_{i,j,k}\big(x^c_{i,j,k} - x^c_{i+1,j,k} - x^a_{i,j,k} + x^a_{i,j,k+1} \big) \\
  &\qquad\qquad\qquad\qquad
  \cdot
  f_{i,j,k}\big(x^b_{i,j,k} - x^b_{i,j,k+1} - x^c_{i,j,k} + x^c_{i,j+1,k} \big)
  \Big]
  \,\rd\bsx \\
  &\,=\,
  \sum_{\bsh\in\calH}
  \prod_{i=0}^{L-1} \prod_{j=0}^{L-1} \prod_{k=0}^{L-1}
  \Big(\widehat{f_{i,j,k}}(h^a_{i,j,k})\,\widehat{f_{i,j,k}}(h^b_{i,j,k})\,\widehat{f_{i,j,k}}(h^c_{i,j,k})\Big),
\end{align*}
where $\bsh = (\bsh^a,\bsh^b,\bsh^c)\in\calH \subset \bbZ^{3L^3}$
satisfies for all $i,j,k$ modulo $L$,
\begin{align} \label{eq:system}
  \begin{cases}
  h^c_{i,j,k} - h^c_{i,j-1,k} - h^b_{i,j,k} + h^b_{i,j,k-1} \,=\, 0, \\
  h^a_{i,j,k} - h^a_{i,j,k-1} - h^c_{i,j,k} + h^c_{i-1,j,k} \,=\, 0, \\
  h^b_{i,j,k} - h^b_{i-1,j,k} -  h^a_{i,j,k} + h^a_{i,j-1,k} \,=\, 0.
  \end{cases}
\end{align}
\end{lemma}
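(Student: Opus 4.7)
The plan is to apply the same Fourier-series strategy used in Lemmas~\ref{lem:conv1D} and~\ref{lem:conv2D}, but now carrying three Fourier indices per lattice site, one for each of the three plaquettes at $(i,j,k)$. First I would introduce three shift variables $y^{(1)}_{i,j,k}, y^{(2)}_{i,j,k}, y^{(3)}_{i,j,k}$, one for each copy of $f_{i,j,k}$, and define the periodic auxiliary function $\calI(\bsy)$ on $D^{3L^3}$ by inserting these shifts into the corresponding plaquette arguments. The desired integral is then recovered as $\calI(\bszero)=\sum_{\bsh\in\bbZ^{3L^3}}\widehat{\calI}(\bsh)$, where $\bsh=(\bsh^a,\bsh^b,\bsh^c)$.

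By the same one-line computation as in \eqref{eq:fourier}, each of the $3L^3$ shift integrals in $\widehat{\calI}(\bsh)$ collapses to $\widehat{f_{i,j,k}}$ evaluated at an integer index, multiplied by an exponential of the corresponding plaquette linear form. Adopting the natural convention that $h^\alpha$ labels the plaquette in the plane perpendicular to the $\alpha$-axis, namely $h^c$ for the $(a,b)$-plaquette (factor 1), $h^b$ for the $(c,a)$-plaquette (factor 2), and $h^a$ for the $(b,c)$-plaquette (factor 3), the Fourier coefficient factors as
\begin{align*}
  \widehat{\calI}(\bsh)
  &\,=\,
  \bigg(\prod_{i=0}^{L-1}\prod_{j=0}^{L-1}\prod_{k=0}^{L-1}
  \widehat{f_{i,j,k}}(h^a_{i,j,k})\,\widehat{f_{i,j,k}}(h^b_{i,j,k})\,\widehat{f_{i,j,k}}(h^c_{i,j,k})\bigg) \\
  &\qquad\times \int_{D^{3L^3}} e^{2\pi\ri\,\Psi(\bsx,\bsh)}\,\rd\bsx,
\end{align*}
where $\Psi$ is a linear form in $\bsx^a,\bsx^b,\bsx^c$ whose coefficients are integer combinations of the $h$'s.

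The separable integral over $\bsx$ then splits into $3L^3$ one-dimensional integrals, one per variable $x^\alpha_{i,j,k}$, each equal to $1$ if the coefficient of that variable in $\Psi$ vanishes and to $0$ otherwise. Collecting these coefficients by re-indexing summation indices modulo $L$ (e.g.\ turning a term $-h^c_{i,j,k}\,x^a_{i,j+1,k}$ into $-h^c_{i,j-1,k}\,x^a_{i,j,k}$), one reads off exactly the three equations of \eqref{eq:system} from the variables $x^a$, $x^b$, $x^c$ respectively. The surviving Fourier coefficients are thus parametrised by $\bsh\in\calH$, and summing yields the claimed formula.

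The main obstacle here is purely combinatorial rather than analytic: each variable $x^\alpha_{i,j,k}$ appears in exactly four plaquettes (two neighbours in each of the two coordinate planes containing direction $\alpha$), and one must track the signs and the perpendicular-direction labeling carefully to recover the system \eqref{eq:system} in the precise form stated. Beyond this bookkeeping, no new ideas are required beyond those already used for the 1D and 2D cases.
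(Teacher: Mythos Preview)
Your proposal is correct and follows essentially the same route as the paper's proof: introduce one shift variable per plaquette factor, compute the Fourier coefficients of the resulting periodic function $\calI(\bsy)$, and read off the constraints \eqref{eq:system} as the vanishing of the coefficients of each $x^\alpha_{i,j,k}$ in the exponent after re-indexing modulo~$L$. Your perpendicular-direction labeling convention ($h^c$ for the $(a,b)$-plaquette, etc.) matches exactly the paper's assignment of $y^c,y^b,y^a$ to the three factors, and the bookkeeping you describe is precisely what the paper carries out.
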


\begin{proof}
Generalizing the 1D and 2D arguments, we define for $\bsy =
(\bsy^a,\bsy^b,\bsy^c) \in D^{3L^3}$,
\begin{align*}
  \calI(\bsy)
  &\,:=\,
  \int_{D^{3L^3}} \prod_{i=0}^{L-1} \prod_{j=0}^{L-1} \prod_{k=0}^{L-1} \Big[
  f_{i,j,k}\big(x^a_{i,j,k} - x^a_{i,j+1,k} - x^b_{i,j,k} + x^b_{i+1,j,k} + y^c_{i,j,k} \big) \\
  &\qquad\qquad\qquad\qquad\qquad\cdot
  f_{i,j,k}\big(x^c_{i,j,k} - x^c_{i+1,j,k} - x^a_{i,j,k} + x^a_{i,j,k+1} + y^b_{i,j,k} \big) \\
  &\qquad\qquad\qquad\qquad\qquad\cdot
  f_{i,j,k}\big(x^b_{i,j,k} - x^b_{i,j,k+1} - x^c_{i,j,k} + x^c_{i,j+1,k} + y^a_{i,j,k} \big)
  \Big]
  \,\rd\bsx.
\end{align*}
For each $\bsh = (\bsh^a,\bsh^b,\bsh^c)\in \bbZ^{3L^3}$, we compute the
Fourier coefficient of $\calI(\bsy)$ to arrive at
\begin{align*}
  \widehat{\calI}(\bsh)
  &=
  \bigg[\prod_{i=0}^{L-1} \prod_{j=0}^{L-1} \prod_{k=0}^{L-1}
  \Big(\widehat{f_{i,j,k}}(h^a_{i,j,k})\,\widehat{f_{i,j,k}}(h^b_{i,j,k})\,\widehat{f_{i,j,k}}(h^c_{i,j,k})\Big)\bigg]
  \int_{D^{3L^3}} e^{2\pi\ri\, p(\bsh,\bsx)}\,\rd\bsx, \\
  p(\bsh,\bsx)
  &=
  \sum_{i=0}^{L-1} \sum_{j=0}^{L-1} \sum_{k=0}^{L-1} \Big[
  \Big(h^c_{i,j,k} - h^c_{i,j-1,k} - h^b_{i,j,k} + h^b_{i,j,k-1} \Big) \,x^a_{i,j,k} \\
  &
  \quad
  + \big(h^a_{i,j,k} - h^a_{i,j,k-1} - h^c_{i,j,k} + h^c_{i-1,j,k} \big) \,x^b_{i,j,k}
  + \big(h^b_{i,j,k} - h^b_{i-1,j,k} -  h^a_{i,j,k} + h^a_{i,j-1,k} \big) \,x^c_{i,j,k} \Big].
\end{align*}
We conclude that $\int_{D^{3L^3}} e^{2\pi\ri\, p(\bsh,\bsx)}\,\rd\bsx$
is equal to $1$ if and only if $\bsh$ belongs to a restricted index set
$\calH \subset\bbZ^{3L^3}$, satisfying \eqref{eq:system} for all indices
$i,j,k$ modulo $L$,
and the integral is equal to $0$ otherwise. We obtain the required formula
by taking $\calI(\bszero) = \sum_{\bsh\in\bbZ^{3L^3}}
\widehat{\calI}(\bsh)$.
\end{proof}

\bibliographystyle{plain}

\begin{thebibliography}{99}

\bibitem{AGHJLV16}
  A.~Ammon, A.~Genz, T.~Hartung, K.~Jansen, H.~Le\"ovey, J.~Volmer,
  {\it On the efficient numerical solution of lattice systems with low-order couplings},
  Comput.\ Phys.\ Commun.\ {\bf 198} (2016), 71--81.

\bibitem{GHJLGM14}
  A.~Ammon, T.~Hartung, K.~Jansen, H.~Le\"ovey, A.~Griewank, M.~M\"uller-Preussker,
  {\it Applicability of Quasi-Monte Carlo for lattice systems},
  PoS LATTICE {\bf 2013} (2014), 040.

\bibitem{GHJLV16a}
  A.~Ammon, T.~Hartung, K.~Jansen, H.~Le\"ovey, J.~Volmer,
  {\it Overcoming the sign problem in one-dimensional QCD by new integration
  rules with polynomial exactness},
  Phys.\ Rev.\ D {\bf 94} (2016), 114508.

\bibitem{GHJLV16b}
  A.~Ammon, T.~Hartung, K.~Jansen, H.~Le\"ovey, J.~Volmer,
  {\it New polynomially exact integration rules on $\U(N)$ and $\SU(N)$},
  PoS LATTICE {\bf 2016} (2016), 334.

\bibitem{Balian:1974xw}
  R.~Balian, J.~M.~Drouffe and C.~Itzykson,
  {\it Gauge Fields on a Lattice. 3. Strong Coupling Expansions and Transition Points},
  Phys.\ Rev.\ D {\bf 11} (1975) 2104.
  Erratum: [Phys.\ Rev.\ D {\bf 19} (1979) 2514].
  %doi:10.1103/PhysRevD.11.2104, 10.1103/PhysRevD.19.2514

\bibitem{Borowka:2018goh}
    S.~Borowka, G.~Heinrich, S.~Jahn, S.~P.~Jones, M.~Kerner and J.~Schlenk,
    {\it A GPU compatible quasi-Monte Carlo integrator interfaced to pySecDec},
    Comput.\ Phys.\ Commun.\
    {\bf 240} (2019) 120--137.
    %doi = "10.1016/j.cpc.2019.02.015",

\bibitem{Bietenholz:1997kr}
  W.~Bietenholz, R.~Brower, S.~Chandrasekharan and U.~J.~Wiese,
  {\it Perfect lattice topology: The Quantum rotor as a test case},
  Phys.\ Lett.\ B {\bf 407} (1997), 283.
  %doi:10.1016/S0370-2693(97)00742-9 [hep-lat/9704015].

\bibitem{Bietenholz:2010xg}
  W.~Bietenholz, U.~Gerber, M.~Pepe and U.-J.~Wiese,
  {\it Topological Lattice Actions},
  JHEP {\bf 1012} (2010), 020.
  %doi:10.1007/JHEP12(2010)020

\bibitem{BG04}
 H.~Bungartz and M.~Griebel,
 \emph{Sparse grids}, Acta Numer.\ \textbf{13} (2004), 147--269.

\bibitem{CMO97}
  R.~E.~Caflisch, W.~Morokoff, and A.B.~Owen,
  {\it Valuation of mortgage backed securities using Brownian bridges
  to reduce effective dimension},
  {J.~Comput.\ Finance} {\bf 1} (1997), 27--46.

\bibitem{Cra08}
  P.~Craig,
  {\it A new reconstruction of multivariate normal orthant probabilities},
  J.\ R.\ Statist.\ Soc.\ B {\bf 70} (2008), 227--243.

\bibitem{deDoncker:2018nqe}
  E.~de~Doncker, A.~Almulihi and F.~Yuasa,
  {\it High-speed evaluation of loop integrals using lattice rules},
  J.\ Phys.\ Conf.\ Ser.\
  {\bf 1085} (2018),
  no.\ 5,
  052005.
  %doi = "10.1088/1742-6596/1085/5/052005",

\bibitem{DKS13}
  {J.~Dick, F.~Y.~Kuo, I.~H.~Sloan},
  {\it High-dimensional integration: the Quasi-Monte Carlo way},
  {Acta Numer.}, \textbf{22} (2013), 133--288.

\bibitem{DP10}
 J.~Dick, F.~Pillichshammer,
 \textit{Digital Nets and Sequences},
 Cambridge University Press, Cambridge, 2010.

\bibitem{Gattringer:2010zz}
  C.~Gattringer and C.~B.~Lang,
  {\it Quantum chromodynamics on the lattice},
  Lect.\ Notes Phys.\  {\bf 788} (2010), 1.
  %doi:10.1007/978-3-642-01850-3

\bibitem{GenKal86}
 A.~Genz, D.~K.~Kahaner,
 {\it The numerical evaluation of certain multivariate normal integrals},
 J.\ Comput.\ Appl.\ Math. {\bf 16} (1986), 255--258.
 %https://doi.org/10.1016/0377-0427(86)90100-7

\bibitem{Hartung:2020uuj}
  T.~Hartung, K.~Jansen, H.~Le\"ovey and J.~Volmer,
  {\it Avoiding the sign-problem in lattice field theory},
  in Monte Carlo and Quasi-Monte Carlo Methods 2018 (B. Tuffin and P. L'Ecuyer, eds),
  Springer Proceedings in Mathematics \& Statistics, vol 324, 2020,
  pp.~231--249.

\bibitem{Hartung:2018usn}
  T.~Hartung and K.~Jansen,
  {\it Zeta-regularized vacuum expectation values},
  J.\ Math.\ Phys.\  {\bf 60} (2019), no.\ 9, 093504.
  %doi:10.1063/1.5085866

\bibitem{Hay06}
 A.~J.~Hayter,
 {\it Recursive integration methodologies with statistical applications},
 J.\ Statist.\ Plann.\ Inference {\bf 136} (2006), 2284--2296.

\bibitem{Hay11}
 A.~J.~Hayter,
 {\it Recursive integration methodologies with applications to the evaluation of multivariate normal
 probabilities},
 J.\ Stat.\ Theory\ Pract.\ {\bf 5} (2011), 563--589.
 %https://doi.org/10.1080/15598608.2011.10483732

\bibitem{Hic98b}
 F.~J.~Hickernell,
 {\it Lattice rules: How well do they measure up?},
 in: Random and Quasi-Random Point Sets (P.~Hellekalek and G.~Larcher, eds.),
 Springer, Berlin, 1998, pp.~109--166.

\bibitem{Horsley:1981gj}
  R.~Horsley and U.~Wolff,
  {\it Weak Coupling Expansion of Wilson Loops in Compact {QED}},
  Phys.\ Lett.\  {\bf 105B} (1981), 290.
  %doi:10.1016/0370-2693(81)90891-1

\bibitem{Jansen:2008vs}
  K.~Jansen,
  \textit{Lattice QCD: A Critical status report},
  PoS LATTICE {\bf 2008} (2008), 010.

\bibitem{Jansen:2019uei}
  K.~Jansen and T.~Hartung,
  {\it Zeta-regularized vacuum expectation values from quantum computing simulations},
  PoS LATTICE {\bf 2019} (2020), 363.

\bibitem{JLAGM14}
  K.~Jansen, H.~Leovey, A.~Ammon, A.~Griewank, M.~M\"uller-Preussker,
  {\it Quasi-Monte Carlo methods for lattice systems: a first look},
  Comput.\ Phys.\ Commun.\ {\bf 185} (2014), 948--959.

\bibitem{Lem09}
 C.~Lemieux,
 \textit{Monte Carlo and Quasi-Monte Carlo Sampling},
 Springer, New York, 2009.

\bibitem{Nie92} H.~Niederreiter,
  {\it Random Number Generation and Quasi-Monte Carlo Methods}, SIAM, 1992.

\bibitem{Nuy14}
  {D.~Nuyens},
  {\it The construction of good lattice rules and polynomial lattice rules},
  in: Uniform Distribution and Quasi-{M}onte {C}arlo Methods
  (P.~Kritzer, H.~Niederreiter, F.~Pillichshammer, A.~Winterhof, eds.),
  Radon Series on Computational and Applied Mathematics Vol.~15,
  De Gruyter, 2014, pp.~223--256.

\bibitem{SJ94}
 I.~H.~Sloan, S.~Joe,
 \textit{Lattice Methods for Multiple Integration},
 Oxford University Press, Oxford, 1994.

\bibitem{Troyer:2004ge}
  M.~Troyer, U.-J.~Wiese,
  \textit{Computational complexity and fundamental limitations to fermionic quantum Monte Carlo simulations},
  Phys.\ Rev.\ Lett.\ {\bf 94} (2005), 170201.

\end{thebibliography}

\end{document}